\newtheorem{theorem}{Theorem}
\newtheorem{corollary}[theorem]{Corollary}
\newtheorem{lemma}[theorem]{Lemma}
\newtheorem{proposition}[theorem]{Proposition}
\newtheorem{assumption}{Assumption}
\theoremstyle{definition}
\newtheorem{definition}{Definition}
\newtheorem{remark}{Remark}
\newcommand{\R}{\mathbb{R}}
\newcommand{\N}{\mathbb{N}}
\newcommand{\mT}{\mathcal{T}}
\renewcommand{\Pr}{\mathbb{P}}
\renewcommand{\hat}{\widehat}
\renewcommand{\tilde}{\widetilde}
\newcommand{\mone}{\textbf{1}}
\renewcommand{\epsilon}{\varepsilon}
\DeclareMathOperator{\Cov}{Cov}
\begin{document}

\begin{frontmatter}

\title{Uniform Confidence Band for Optimal Transport Map on One-Dimensional Data\support{DP is financially supported by the Faculty of Science, Chiang Mai University, Thailand. RO was supported by Grant-in-Aid for JSPS Fellows (22J21512). MI was supported by JSPS KAKENHI (21K11780), JST CREST (JPMJCR21D2), and JST FOREST (JPMJFR216I).}}

\runtitle{Uniform Confidence Band for OT Map on 1D Data}

\author{\fnms{Donlapark} \snm{Ponnoprat}\ead[label=e1]{donlapark.p@cmu.ac.th}}
\address{Chiang Mai University\\\printead{e1}}

\author{\fnms{Ryo} \snm{Okano}\ead[label=e2]{okano-ryo1134@g.ecc.u-tokyo.ac.jp}}
\address{The University of Tokyo\\\printead{e2}}

\author{\fnms{Masaaki} \snm{Imaizumi}\ead[label=e3]{imaizumi@g.ecc.u-tokyo.ac.jp}}
\address{The University of Tokyo / RIKEN Center for Advanced Intelligence Project\\\printead{e3}}

\runauthor{Donlapark, Okano, and Imaizumi}

\begin{abstract}
    We develop a statistical inference method for an optimal transport map between distributions on real numbers with uniform confidence bands. The concept of optimal transport (OT) is used to measure distances between distributions, and OT maps are used to construct the distance. OT has been applied in many fields in recent years, and its statistical properties have attracted much interest. In particular, since the OT map is a function, a uniform norm-based statistical inference is significant for visualization and interpretation. In this study, we derive a limit distribution of a uniform norm of an estimation error for the OT map, and then develop a uniform confidence band based on it. In addition to our limit theorem, we develop a bootstrap method with kernel smoothing, then also derive its validation and guarantee on an asymptotic coverage probability of the confidence band. Our proof is based on the functional delta method and the representation of OT maps on the reals.
\end{abstract}
\begin{keyword}[class=MSC]
\kwd[Primary ]{62G15}
\kwd{49Q22}
\kwd[; secondary ]{62F40}
\end{keyword}

\begin{keyword}
\kwd{Optimal Transport}
\kwd{Confidence Band}
\kwd{Bootstrap}
\end{keyword}
\end{frontmatter}

\allowdisplaybreaks

\section{Introduction}
We consider the framework of optimal transport (OT) and develop a statistical inference method on an OT map used in the concept.
Specifically, we consider the Monge problem; for probability measure $P$ and $Q$ on $\R$, we consider the following optimization problem
\begin{align}
    \min_{T \in \mT(P,Q)} \int_\R h(|x - T(x)|) dP(x), \label{def:monge}
\end{align}
where $h:\R \to \R$ is a nonnegative convex function (where $h(r)=r$ or $h(r)=r^2$ in most applications) and $\mT(P,Q)$ is a set of measurable maps $T: \R \to \R$ such that $P(T^{-1}(A)) = Q(A)$ for any measurable $A \subset \R$.
Let $T_0 \in \mT(P,Q)$ be an OT map which is the minimizer of the problem \eqref{def:monge}.
Our goal is the statistical inference on the OT map from samples: we construct a confidence band for $T_0$ based on samples independently generated from $P$ and $Q$, respectively. 
To this end, we develop estimators for the OT map estimator and derive a limiting distribution of their estimation error.

OT is a general framework to measure the distance between probability measures, and has attracted attention in a wide range of fields related to data analysis, such as social science, statistics, machine learning, and image analysis \cite{arjovsky2017wasserstein,galichon2018optimal,torous2021optimal}. 
For a general textbook, see \cite{villani2009optimal,peyre2019computational,villani2021topics}. 
The formulation of OT is given by the Monge problem \eqref{def:monge} or the Kantorovich problem related to the Wasserstein distance \cite{kantorovich1960mathematical}, in which the OT map plays an important role. 
OT plays a major role in modern data science due to its flexible extensibility and adaptability to high-dimensional data.

Statistical analysis for OT has played an important role in the usage of OT with finite samples, which are useful in assessing the uncertainty of estimated elements of the OT framework.
The most representative analysis examines the sample complexity of estimation methods for OT.
One of the typical interests is the problem of estimating the sum of optimized transport costs, which corresponds to the Wasserstein distance or its variants, and numerous studies have proposed optimal estimation methods \cite{weed2019sharp,genevay2019sample,niles2022estimation}.
In recent years, several studies \cite{hutter2021minimax,deb2021rates} have developed estimation methods for OT maps and also investigated their theoretical accuracy. 
On the other hand, statistical inference for OT, i.e., statistical tests and confidence sets, has also attracted attention. 
In addition to inference methods on OT costs and distances \cite{del2019central,sommerfeld2018inference,okano2022inference}, \cite{goldfeld2022limit,sadhu2023limit,manole2023central} propose inference methods on OT maps in several settings.

Despite the importance, the statistical inference for OT maps is still a developing issue, even when the sample is one-dimensional.
This is because that OT maps are functions and hence it is nontrivial to handle its uncertainty properly in a function space.
In particular, when investigating a global shape of an OT map (e.g., monotonicity) in some application fields, it is necessary to examine errors of the OT map estimator in the uniform norm sense, which is a nontrivial challenge.

In this paper, we develop a framework of confidence bands for an OT map between two unknown one-dimensional distributions. 
Specifically, we develop a \textit{uniform} confidence band that measures the OT map considering all input points in its support.
A uniform confidence band has the following advantages. 
First, it allows us to study the global shape of the OT map to be estimated. This advantage can not be achieved by arranging pointwise confidence intervals, which generally results in highly discontinuous curves. 
Second, it can be visualized on a plot and hence is easy to interpret, in contrast to confidence intervals based on the $L^2$-norm, which cannot be easily visualized.
Our approach is to use kernel smoothing on the distribution functions, and then perform a bootstrap scheme with kernel smoothing on the estimated distributions in order to construct a confidence band. As a theoretical contribution, we derive the asymptotic distribution of the estimation error in order to validate our confidence band.

Our contributions can be summarized as follows:
\begin{itemize}
    \item We develop the statistical inference on OT maps by proposing the uniform confidence bands that are computationally tractable.
    Coverage probabilities of the proposed bands are theoretically validated, and our simulations support the validity.
    \item The uniform confidence bands allow us to study a global property of functions, which can investigate complex hypotheses on OT maps.
    Our real-data experiments with population data demonstrate that our method rigorously tests a hypothesis whether the population pyramid changes faster, by examining a difference in shape between the OT map and a linear function.
\end{itemize}

As a technical contribution, we derive a simple asymptotic representation of the OT map estimation error as a linear sum of estimation errors for cumulative distribution functions. From this, we can apply the functional delta method to the representation, thereby obtaining the asymptotic distribution of the OT map estimation error.

\subsection{Related Studies}

Optimal transport has been actively studied in recent years. 
For a comprehensive review or textbook, see \cite{villani2009optimal,peyre2019computational,villani2021topics}.
It is known that there are several variations of the OT problem, such as the entropic regularization \cite{cuturi2013sinkhorn} or a low-dimensional projection named slicing \cite{bonneel2015sliced}.
Since there are numerous papers on OT, we refer to only a few that are relevant to our study.

Many studies have examined the sample complexity of the problem of estimating elements of OT from observed samples. 

A typical estimation target is a sum of optimized transport costs.
In the setting of one-dimensional data, \cite{bobkov2019one} gives a comprehensive analysis for the statistical properties of OT costs.
\cite{dudley1969speed,weed2019sharp, manole2021sharp} studied the estimation of the cost and reported that an optimal convergence rate of the estimation error is slow depending on the dimension of data, i.e., there is the curse of dimensionality. 
Related to this curse, \cite{niles2022estimation,genevay2019sample,lin2020projection,lei2020convergence} reported that the curse can be reduced by introducing the low-dimensional projection or the entropic regularization to the OT problem.
The estimation error of an OT map is also a target of intense study.
\cite{hutter2021minimax} has shown that an estimation error of OT maps also suffers from the curse of dimensionality of data, as is the case for the sum of costs. 
Similarly, entropic regularization and other techniques have been shown to mitigate this rate \cite{deb2021rates,manole2021plugin,pooladian2021entropic,divol2022optimal}.

For statistical inference, it is also a concern to derive limiting distributions of an error of estimators. 
Importantly, when data are multi-dimensional, it is not easy to obtain the limiting distributions relevant to the OT problem. 
Therefore, statistical inference is performed for the OT problem when the data are one-dimensional or discrete, or when some types of regularization are introduced.
For a limiting distribution of estimating a sum of transportation costs, \cite{munk1998nonparametric, ramdas2017wasserstein, del1999tests, del2005asymptotics} studied the one-dimensional case, and \cite{sommerfeld2018inference,bigot2019central, tameling2019empirical, klatt2020empirical,okano2022inference} studied the discrete data case.
\cite{mena2019statistical,del2022central,del2022improved,goldfeld2022statistical,manole2022minimax,imaizumi2022hypothesis} investigated a case with the regularization.
\cite{deo2023adaptive} constructs a confidence interval based on the OT distance in the problem of estimating density functions.
This situation is similar to the limiting distribution of estimating OT maps.
\cite{goldfeld2022limit} developed a statistical inference method on an OT map for the regularized OT problem case, and \cite{sadhu2023limit} derived a limiting distribution of an estimator for an OT map with a setting of semi-discrete data.
\cite{manole2023central} investigated the availability of limiting distributions in the multi-dimensional case and derived a pointwise convergence to a limiting distribution under certain conditions.

\subsection{Notation}

A \emph{cadlag function} is a right continuous function whose limits from the left exist everywhere.
$D[a,b]$ denotes the space of all cadlag functions $z:[a,b]\to [-1,1]$ equipped with the uniform norm. For a topological space $\Omega$, $BL_1(\Omega)$ denotes the space of Lipschitz functions $h:\Omega \to [-1,1]$ with Lipschitz constants bounded by one.
$\|\cdot\|$ denotes the Euclidean norm.
$\|f\|_{\infty} = \sup_t |f(t)|$ denotes the sup-norm.
$\ell^{\infty}(\mathcal{F})$ denotes the set of all uniformly bounded real functions on $\mathbb{R}$.
$\delta_x$ is the Dirac measure at $x$.
With an event $E$,  $\mone\{E\}$ is an indicator function.

For a distribution function $F:\Omega \to [0,1]$ with a support $\Omega \subset \R$ and $p \in [0,1]$, $F^{-1}(p) := \inf \{ x \in \Omega : p \leq F(x)\}$ denotes a quantile function of $F$.
Given probability measures $P$ and $Q$ on a support $\Omega \subseteq \R^d$, we define a set of transport maps $\mT(P,Q)$, which is a set of maps such that $T_{\sharp} P \coloneqq P(T^{-1}(\cdot)) = Q$. For random sequences $X_n$ and $Y_m$, the notation $(X_n,Y_m) = o_{P\otimes Q}(1)$ means $(X_n,Y_m)$ converges to $(0,0)$ in $P\otimes Q$-probability as $n,m\to\infty$ and possibly other conditions on $n$ and $m$ (see Section~\ref{sec:problem} below). 
$\xrightarrow{d}$ means convergence in distribution.
For non-random sequences of reals $\{x_n\}_{n \in \N}$ and $ \{y_n\}_{n \in \N}$, $x_n \asymp y_n$ denotes $\lim_{n \to \infty} x_n/y_n \to c$ with some $c  \in (0,\infty)$.

For any non-negative integer, $\beta$ and open set $U\subseteq \R$, $C^\beta(U)$ is the space of all bounded continuous real-valued functions that are $s$-times differentiable on $U$. This can be extended to the notion of H\"older space: $C^\beta(U)$ is the space of functions $f\in C^{\lfloor \beta \rfloor}(U)$ ($\lfloor \beta \rfloor$ is the integer part of $\beta$) whose $\lfloor \beta \rfloor$-th derivative is H\"older continuous with exponent $\beta-\lfloor \beta \rfloor$, that is, there is some constant $C>0$ such that for all $x,y\in U$, $\lvert D^{\lfloor \beta \rfloor}(x) - D^{\lfloor \beta \rfloor}(y) \rvert \leq C \vert x - y \rvert^{\beta - \lfloor \beta \rfloor}$ holds. 
We say that a function $f$ is (continuously) differentiable on $[a,b]$ if there is a small $\epsilon>0$ such that $f$ is (continuously) differentiable on $(a-\epsilon, b+\epsilon)$.

\subsection{Paper Organization}

Section \ref{sec:problem_setting} gives the optimal transport problem and its associated statistical inference problem.
Section \ref{sec:band} develops a uniform confidence band of the OT map as our proposed methodology.
Section \ref{sec:theory} shows theoretical guarantees of the developed confidence band with an outline of the proof.
Section \ref{sec:simulation} conducts a numerical simulation to show the experimental validity of the proposed confidence band.
Section \ref{sec:real_data} handles a real data analysis to demonstrate the usefulness of our method.
Section \ref{sec:conclusion} finally concludes our study.

\section{Problem Setting} \label{sec:problem_setting}

\subsection{Optimal Transport Problem with Samples}
\label{sec:problem}

We consider the setup with $d=1$.
Let $F_P$ and $F_Q$ be distribution functions of $P$ and $Q$, respectively. We shall make an assumption that $F_P$ is continuous, which guarantees a solution to the OT map problem~\eqref{def:monge} of the form (see e.g.~\cite[Remarks 2.19]{villani2021topics}):
\begin{align}
    T_0(x) = F_Q^{-1} \circ F_P (x). \label{eq:form_otmap}
\end{align}
Throughout the paper, we will only consider the values of the OT map $T_0$ on a closed interval $[a,b]$. For simplicity, we make an abuse of notation and treat $T_0$ and members of $\mT(P,Q)$ as functions on $[a,b]$.

Suppose that we observe $n$ i.i.d. samples $X_1,...,X_n \sim P$ and $m$ i.i.d. samples $Y_1,...,Y_m \sim Q$.
For simplicity, we assume that $n$ and $m$ follow the following asymptotic that often appears in nonparametric two-sample tests: with some $\kappa \in (0,1)$:
\begin{align}
    n/(n+m) \to \kappa . \label{def:ratio}
\end{align}

Our goal with this problem is to infer the true OT map $T_0$ using the samples. 
Specifically, we construct an estimator of $T_0$ based on the sample, as well as the following statistical inference.

\subsection{Statistical Inference via Confidence Band}

We aim to develop an estimator for the OT map $T_0$ and conduct statistical inference for $T_0$ from $n$ observations.
Specifically, with pre-specified $\alpha \in (0,1)$, we will develop a set $\mathcal{C}^{(\alpha)} = \{[\underline{c}(x), \overline{c}(x)] \mid x \in [a,b]\}$ with some functions $\underline{c}(x), \overline{c}(x)$ depending on the $n$ samples and $m$ samples and shows that
\begin{align}
    \Pr(T_0 \in \mathcal{C}^{(\alpha)}) = 1 - \alpha + o(1), \label{eq:validband}
\end{align}
as $n,m \to \infty$ with the limit ratio \eqref{def:ratio}.

Importantly, our interest is a confidence interval using the sup-norm, rigorously, our theory will show that $T_0(x) \in [\underline{c}(x), \overline{c}(x)]$ holds for every $x \in [a,b]$ with the asymptotic probability $1 - \alpha$.
The sup-norm is useful for intuitive analysis because it provides visualization in a functional space, whereas several common norms, e.g., the $L^2$-norm, cannot be visualized on a plot.

\section{Methodology}\label{sec:band}

In this section, we describe our methodology for constructing a confidence band for the OT map. First, we propose a kernel estimator for the OT map (Section~\ref{sec:kernelOT}). We then construct a confidence band based on this estimator and a bootstrap scheme (Section~\ref{sec:confband}).

\subsection{Kernel Estimator for OT Map}\label{sec:kernelOT}

Suppose that we observe the samples $X_1,...,X_n \sim P$ and $Y_1,\ldots,Y_m\sim Q$.
We estimate density and distribution functions from the samples, then use them to construct an estimator for the OT map $T_0$.

First, we define a kernel density estimator.
As a preparation, we define a kernel function $K:\R \to \R$, which should be a positive function and satisfy $\int K(t) dt = 1$.
There are several common choices: the Gaussian kernel, the Epanechinikov kernel, and so on (for an overview, see \cite{Silverman2018}).
Then, we define density estimators as
\begin{equation}\label{eq:kde}
    \hat{f}_P(x) \coloneqq \frac{1}{nr_n} \sum_{i=1}^n K\left( \frac{x - X_i}{r_n} \right) \quad \text{and} \quad \hat{f}_Q(y) \coloneqq \frac{1}{mr_m} \sum_{j=1}^m K\left( \frac{y - Y_j}{r_m} \right),
\end{equation}
where $r_n, r_m>0$ are bandwidth parameters dependent on $n$ and $m$, respectively. 
Second, we define estimators for distribution functions $F_P$ and $F_Q$ with $\overline{K}(x)\coloneqq \int_{-\infty}^x K(u) du$ as follows:
\begin{align}
    \hat{F}_P(x) &\coloneqq \int_{-\infty}^x \hat{f}_P(u) du = \frac{1}{n} \sum_{i=1}^n \overline{K}\left( \frac{x - X_i}{r_n} \right)
\end{align}
and
\begin{align}
    \hat{F}_Q(y) \coloneqq \int_{-\infty}^y \hat{f}_Q(u) du = \frac{1}{m} \sum_{j=1}^m \overline{K}\left( \frac{y - Y_j}{r_m} \right).
\end{align}
Since $\hat{f}_P$ and $\hat{f}_Q$ are strictly positive by the positive property of the kernel $K$, $\hat{F}_P$ and $\hat{F}_Q$ are strictly increasing function and hence invertible. 

We define an estimator for the OT map $T_0$ using the kernel estimators.
Using the form \eqref{eq:form_otmap}, we define the kernel smoothed estimator for the OT map $T_0$:
\begin{align}
    \hat{T}_{n,m}(x) = \hat{F}_Q^{-1} \circ \hat{F}_P (x).
\end{align}
This estimator has several advantages.
First, we can achieve a smooth estimation, and hence a smooth confidence band, for any input $x$. 
Second, we can guarantee the asymptotic validity of the uniform confidence band using smoothness. 
Third, as will be shown later, we can develop a bootstrap method with kernel smoothing for practical use and show its convergence.
We will compare this kernel-based approach with a pointwise estimator using empirical distributions in Remark \ref{remark:pointwise}.

\subsubsection{Assumption}

For the kernel estimation, we give the following assumptions on the distribution functions $F_P, F_Q$, and the kernel $K$:
\begin{assumption}\label{assumption:FGK}
    $F_P$ and $F_Q$ are differentiable on $\R$. For some $\beta\geq 0$, the densities $f_P\coloneqq F_P'$ and $f_Q \coloneqq F_Q'$ satisfy $f_P,f_Q \in C^\beta(\R)$.
    Further, the kernel $K$ is of order $> \beta + 1/2$, that is, $\int K(u)=1$ and $\int u^j K(u) du = 0$ holds for $j = 1,2,...,s$ for $s > \beta + 1/2$.
    Also, the bandwidth parameter is $r_n \asymp n^{-1/(2\beta+1)}$.
\end{assumption}
These conditions are widespread in density estimation using kernel methods. 
Regarding the order of the kernel, for example, \cite{tsybakov2008introduction} describes a method to construct kernels of arbitrary order using Legendre polynomials. 
The setup of the bandwidth parameter is commonly used as well, which is designed to balance bias and variance in density function estimation.
See \cite{tsybakov2008introduction} for details.

\begin{remark}[Choice of kernels and bandwidth]
    The choice of the kernel $K$ and bandwidth $r_n$ in Assumption \ref{assumption:FGK} is one of the methods used to implement \textit{under-smoothing}, which is common in constructions of confidence bands as summarized in Section 5.7 of \cite{wasserman2006all}.
In our design, we increase the order of the kernel $K$ from $\beta$ to $\beta + 1/2$ while keeping the bandwidth as $n^{-1/(2\beta+1)}$, the usual choice that leads to optimal rates of convergence in kernel density estimation.
This approach with the larger order kernels follows Corollary 2 of \cite{Gin2007}, which in turn allows us to utilize the functional delta method in the proof of our main theorem below.
\end{remark}

We further put the following assumption on a density function.

\begin{assumption}\label{assumption:gK}
    $f_Q$ is positive on $[F_Q^{-1}(F_P(a)),F_Q^{-1}(F_P(b))]$ and $K$ has bounded variation.
\end{assumption}
We remark that Assumption~\ref{assumption:gK} is more flexible than assuming that $f_Q$ is positive on $\R$ as it allows distributions that are supported on a proper subset of $\R$, such as gamma or chi-squared distributions.

\subsection{Construction of Confidence Band}\label{sec:confband}
We develop our methodology to construct a confidence band for $T_0$ based on the estimator $\hat{T}_{n,m}$.

\subsubsection{Bahadur Representation of Estimation Error}
Our basic strategy is a Gaussian approximation of the estimation error $\hat{T}_{n,m}(x) - T_0(x)$. 
Specifically, we investigate the following scaled estimation error
\begin{align}\label{eq:estconv_outline}
    \sqrt{n+m}\sup_x \frac{\big\lvert\hat{T}_{n,m}(x) - T_0(x)\big\rvert }{s_\kappa(x)},
\end{align}
then show that it converges in distribution to a limiting Gaussian process as $n,m \to \infty$, where $s_\kappa(x)$ is a standard deviation of $\hat{T}_{n,m}(x) - T_0(x)$. 
To achieve the Gaussian limit of \eqref{eq:estconv_outline}, the following asymptotic linear form, named the \textit{Bahadur representation}, plays an important role:
\begin{proposition}[Bahadur Representation]\label{prop:smootherror}
    Suppose that Assumption~\ref{assumption:FGK} and ~\ref{assumption:gK} hold.
    Then, for any $x\in [a,b]$, we have
    \begin{equation}\label{eq:smoothbahadur}
        \sqrt{n+m}\left(\hat{T}_{n,m}(x) - T_0(x)\right) = \frac{\sqrt{n+m}}{n} \sum_{i=1}^n \hat{\psi}(X_i,x) + \frac{\sqrt{n+m}}{m}\sum_{j=1}^m  \hat{\zeta}(Y_j,x)+o_{P\otimes Q}(1),
    \end{equation}
    where
    \begin{align}
        \hat{\psi}(X_i,x) &\coloneqq \frac{1}{f_Q(T_0(x))} \left[ \overline{K}\left(\frac{x - X_i}{r_n}\right) - F_P(x) \right],
    \end{align}
    and
    \begin{align}
        \hat{\zeta}(Y_j,x) \coloneqq \frac{1}{f_Q(T_0(x))}\left[F_P(x) - \overline{K}\left(\frac{T_0(x) - Y_j}{r_m}\right) \right].
    \end{align}
\end{proposition}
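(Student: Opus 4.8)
The plan is to expand the estimation error $\hat T_{n,m}(x)-T_0(x) = \hat F_Q^{-1}\circ\hat F_P(x) - F_Q^{-1}\circ F_P(x)$ by linearizing each component around its population counterpart. Write $\hat T_{n,m}(x)-T_0(x) = \big(\hat F_Q^{-1}(\hat F_P(x)) - F_Q^{-1}(\hat F_P(x))\big) + \big(F_Q^{-1}(\hat F_P(x)) - F_Q^{-1}(F_P(x))\big)$, so that the two bracketed terms capture, respectively, the error in estimating $F_Q^{-1}$ and the propagation of the error in $\hat F_P$ through the (true) quantile map. First I would handle the second term: since $f_Q = F_Q'$ is positive and $C^\beta$ on the relevant interval (Assumptions~\ref{assumption:FGK} and~\ref{assumption:gK}), $F_Q^{-1}$ is continuously differentiable near $F_P(x)$ with derivative $1/f_Q(T_0(x))$, so a first-order Taylor expansion gives $F_Q^{-1}(\hat F_P(x)) - F_Q^{-1}(F_P(x)) = \frac{1}{f_Q(T_0(x))}(\hat F_P(x)-F_P(x)) + O((\hat F_P(x)-F_P(x))^2)$. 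The remainder is $o_{P\otimes Q}((n+m)^{-1/2})$ because $\hat F_P(x)-F_P(x) = O_P(n^{-1/2})$ uniformly (a kernel-smoothed empirical CDF has the same $\sqrt n$-rate as the empirical CDF, plus a bias term killed by the higher-order kernel in Assumption~\ref{assumption:FGK}). Plugging in $\hat F_P(x) = \frac1n\sum_i \overline K((x-X_i)/r_n)$ reproduces exactly the $\hat\psi$ term.

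For the first term, the cleanest route is to pass through the identity $\hat F_Q(\hat F_Q^{-1}(p)) = p$ together with the mean-value theorem, or equivalently to use the standard Bahadur-type expansion for a smoothed quantile. Evaluating at $p = \hat F_P(x)$, write $\hat F_Q^{-1}(\hat F_P(x)) - F_Q^{-1}(\hat F_P(x))$; since $\hat F_P(x)\to F_P(x)$, this is an expansion of $\hat F_Q^{-1} - F_Q^{-1}$ in a shrinking neighborhood of $F_P(x)$, hence near $T_0(x)$ on the value side. The smoothed-quantile Bahadur representation (obtained by differentiating $\hat F_Q(\hat F_Q^{-1}(p)) = p$ and controlling the error via the modulus of continuity of $\hat F_Q - F_Q$, for which one invokes Assumption~\ref{assumption:gK} that $K$ has bounded variation) yields $\hat F_Q^{-1}(p) - F_Q^{-1}(p) = -\frac{1}{f_Q(F_Q^{-1}(p))}\big(\hat F_Q(F_Q^{-1}(p)) - F_Q(F_Q^{-1}(p))\big) + o_{P\otimes Q}((n+m)^{-1/2})$. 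Setting $p=\hat F_P(x)$ and then replacing $F_Q^{-1}(\hat F_P(x))$ by $F_Q^{-1}(F_P(x)) = T_0(x)$ inside the slowly-varying quantities (the error of this replacement being of smaller order, again by the $C^\beta$-smoothness of $f_Q$ and the consistency of $\hat F_P$), and writing $\hat F_Q(T_0(x)) = \frac1m\sum_j \overline K((T_0(x)-Y_j)/r_m)$, $F_Q(T_0(x)) = F_P(x)$, reproduces the $\hat\zeta$ term. Multiplying through by $\sqrt{n+m}$ and collecting the $o_{P\otimes Q}((n+m)^{-1/2})$ remainders—each multiplied back up to $o_{P\otimes Q}(1)$—gives \eqref{eq:smoothbahadur}.

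The main obstacle is controlling the two "substitution" errors uniformly enough to guarantee they are $o_{P\otimes Q}((n+m)^{-1/2})$ rather than merely $O_P((n+m)^{-1/2})$: namely the quadratic Taylor remainder in the second term and, more delicately, the error incurred by evaluating the smoothed-quantile expansion at the random point $\hat F_P(x)$ and then recentering at $T_0(x)$. For the latter one needs (i) a uniform (in a neighborhood) linearization of $\hat F_Q^{-1}-F_Q^{-1}$, which requires the oscillation bound $\sup_{|t-T_0(x)|\le\delta_n}\lvert(\hat F_Q-F_Q)(t)-(\hat F_Q-F_Q)(T_0(x))\rvert = o_{P\otimes Q}((n+m)^{-1/2})$ for $\delta_n\to 0$ at the rate $\lvert\hat F_P(x)-F_P(x)\rvert = O_P(n^{-1/2})$—this is where the bounded-variation assumption on $K$ and a bracketing/entropy argument for the smoothed empirical process enter—and (ii) the bias terms from the kernel smoothing of both $F_P$ and $F_Q$ to be $o((n+m)^{-1/2})$, which is precisely what the order-$(>\beta+1/2)$ kernel combined with $r_n\asymp n^{-1/(2\beta+1)}$ delivers (giving bias $O(r_n^{s}) = o(n^{-1/2})$). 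Once these two estimates are in place the rest is bookkeeping. I would organize the write-up as two lemmas—one for the smoothed-CDF uniform linearization of the quantile, one for the bias control—followed by the assembly above.
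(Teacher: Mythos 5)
Your proposal is correct in substance and arrives at the same linearization, but it takes a more hands-on route than the paper, which is worth contrasting. The paper proves the proposition in two layers: it first shows (Lemma~\ref{lemma:hadamarddiff}) that $\phi(u,v)=v^{-1}\circ u$ is Hadamard differentiable at $(F_P,F_Q)$ with derivative $\phi'(u,v)=(u-v\circ T_0)/(f_Q\circ T_0)$, and separately invokes Corollary~2 of \cite{Gin2007} to get the weak convergence $\sqrt{n}(\hat F_P-F_P)\xrightarrow{d}\mathbb{G}_1\circ F_P$ and $\sqrt{m}(\hat F_Q-F_Q)\xrightarrow{d}\mathbb{G}_2\circ F_Q$ for the kernel-smoothed CDFs; the functional delta method (Lemma~\ref{lemma:functional_delta}, in particular equation~\eqref{eq:func_convP}) then delivers the Bahadur representation with a uniform $o_{P\otimes Q}(1)$ remainder automatically. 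Interestingly, the decomposition you start from, $\hat T_{n,m}-T_0 = (\hat F_Q^{-1}\circ\hat F_P - F_Q^{-1}\circ\hat F_P) + (F_Q^{-1}\circ\hat F_P - F_Q^{-1}\circ F_P)$, is exactly the splitting the paper uses \emph{inside} the proof of Lemma~\ref{lemma:hadamarddiff}, but there it is applied to deterministic perturbations $u_t\to u$, $v_t\to v$, where the oscillation and uniformity issues you flag simply do not arise; the statistical heavy lifting is then absorbed once and for all by the delta-method machinery. Your plan unpacks this abstraction and works directly in the stochastic setting, so you must re-establish by hand the two facts the delta method gives for free: that the bias of the smoothed CDFs is $o(n^{-1/2})$ (which is precisely what the order-$(>\beta+1/2)$ kernel plus $r_n\asymp n^{-1/(2\beta+1)}$ buys, and what \cite{Gin2007} packages) and a uniform oscillation/entropy bound allowing you to recenter the smoothed-quantile expansion at $T_0(x)$. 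You correctly identify these as the main obstacles, and the plan you sketch (a smoothed-quantile Bahadur lemma plus a bias-control lemma) would work. One small imprecision: you write the Taylor remainder of $F_Q^{-1}$ as $O\!\big((\hat F_P(x)-F_P(x))^2\big)$, which would require $f_Q$ to be differentiable; under Assumption~\ref{assumption:FGK} with $\beta<1$, $f_Q$ is only H\"older continuous, so you only have an $o\!\big(\lvert\hat F_P(x)-F_P(x)\rvert\big)$ remainder — but that is exactly what the paper uses in Lemma~\ref{lemma:hadamarddiff} and it is sufficient, so your final $o_{P\otimes Q}(1)$ conclusion is unaffected.
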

Since we assume that $Q$ has a density $f_Q$ that is nowhere zero, the representation holds for all $x\in [a,b]$.
With this asymptotic linear representation, we can guarantee the existence of Gaussian processes in the limit.
See Theorem~\ref{thm:consistency} below for a formal result.

\subsubsection{Plugin-Estimator for Standard Deviation}
To use the limiting Gaussian process in practice, we should derive several values.
First, we consider the standard deviation term $s_\kappa(x)$ of $\hat{T}_{n,m}(x) - T_0(x)$ appearing in \eqref{eq:estconv_outline}.
In view of~\eqref{eq:smoothbahadur}, we consider the following form of the standard deviation as
\begin{align}
    s_\kappa(x) &= \frac1{f_Q(T_0(x))}\sqrt{\left(\frac{1}{\kappa}+\frac1{1-\kappa}\right)F_P(x)(1-F_P(x))},
\end{align}
in which $F_Q(T_0(x)) = F_Q(F_Q^{-1}(F_P(x)))=F_P(x)$ plays an important role using the continuity of $F_Q$. 
In practice, we do not know $F_P,F_Q$, and $f_Q$, so it is impossible to compute $s_\kappa(x)$.  Instead, we estimate $s_\kappa(x)$ by the plug-in estimator:
\begin{align}
    \hat{s}_{n,m}(x) &= \frac1{\hat{f}_Q(\hat{T}_{n,m}(x))}\sqrt{\left( \frac{n+m}{n} + \frac{n+m}{m}\right)\hat{F}_P(x)(1-\hat{F}_P(x))}.
\end{align}
Note that the plug-in estimator always exists by the positivity of $\hat{f}_Q$.
The following result shows its consistency:
\begin{lemma}\label{lemma:sestimate}
    Suppose that Assumption~\ref{assumption:FGK} and ~\ref{assumption:gK} hold.
    Then, we have the following convergence:
    \begin{equation}\label{eq:sconv}
        \sup_{x\in [a,b]} \left\lvert \frac{\hat{s}_{n,m}(x)}{s_\kappa(x)}-1 \right\rvert \xrightarrow{P\otimes Q} 0.
    \end{equation}
\end{lemma}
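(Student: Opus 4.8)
The plan is to decompose the ratio $\hat{s}_{n,m}(x)/s_\kappa(x)$ into two factors and control each uniformly over $[a,b]$. Writing out both quantities, the variance-constant factor $\sqrt{(n+m)/n + (n+m)/m}$ in $\hat{s}_{n,m}$ matches $\sqrt{1/\kappa + 1/(1-\kappa)}$ in $s_\kappa$ up to a deterministic $1+o(1)$ factor by the ratio assumption \eqref{def:ratio}, so it suffices to show
\begin{equation}\label{eq:ratio-split}
    \sup_{x\in[a,b]} \left\lvert \frac{f_Q(T_0(x))}{\hat{f}_Q(\hat{T}_{n,m}(x))} \cdot \sqrt{\frac{\hat{F}_P(x)(1-\hat{F}_P(x))}{F_P(x)(1-F_P(x))}} - 1 \right\rvert \xrightarrow{P\otimes Q} 0.
\end{equation}
For the second radical, I would use the standard uniform consistency of the kernel-smoothed empirical CDF, $\sup_x |\hat{F}_P(x) - F_P(x)| \xrightarrow{P} 0$, which follows from the Glivenko--Cantelli theorem plus the fact that kernel smoothing with bandwidth $r_n\to 0$ perturbs the empirical CDF by at most $O(r_n)$ uniformly (under Assumption \ref{assumption:FGK}); combined with the continuity of $p\mapsto p(1-p)$ and the fact that $F_P$ is bounded away from $0$ and $1$ on $[a,b]$ (since $F_P$ is continuous and strictly relevant on the interval, $F_P(a) > 0$ would need an argument --- more carefully, $F_P(x)(1-F_P(x))$ is bounded below on $[a,b]$ only if $F_P(a)>0$ and $F_P(b)<1$, which I would either assume implicitly as part of the setup or handle by noting $s_\kappa(x)$ is the true standard deviation and hence positive, so the ratio is well-defined), the second factor converges to $1$ uniformly.

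The main work is the first factor, $f_Q(T_0(x))/\hat{f}_Q(\hat{T}_{n,m}(x))$. I would split this as
\begin{equation}\label{eq:fq-split}
    \frac{f_Q(T_0(x))}{\hat{f}_Q(\hat{T}_{n,m}(x))} = \frac{f_Q(T_0(x))}{f_Q(\hat{T}_{n,m}(x))} \cdot \frac{f_Q(\hat{T}_{n,m}(x))}{\hat{f}_Q(\hat{T}_{n,m}(x))}.
\end{equation}
For the first piece: Proposition \ref{prop:smootherror} (or more simply the uniform consistency $\sup_x|\hat{T}_{n,m}(x) - T_0(x)| \xrightarrow{P\otimes Q} 0$, which follows from uniform consistency of $\hat{F}_P, \hat{F}_Q$ and continuity of the quantile map on the relevant compact range) shows $\hat{T}_{n,m}(x)$ is uniformly close to $T_0(x)$; since $T_0([a,b]) = [F_Q^{-1}(F_P(a)), F_Q^{-1}(F_P(b))]$ is compact and $f_Q$ is continuous and bounded away from zero on a neighborhood of it by Assumption \ref{assumption:gK}, uniform continuity of $f_Q$ there gives $\sup_x |f_Q(T_0(x)) - f_Q(\hat{T}_{n,m}(x))| \to 0$ and the ratio $\to 1$. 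For the second piece, I need $\sup_{y \in J} |\hat{f}_Q(y) - f_Q(y)| \xrightarrow{P} 0$ on a compact set $J$ containing $T_0([a,b])$ and all $\hat{T}_{n,m}(x)$ values (with probability tending to one); this is the classical uniform consistency of kernel density estimators, valid under Assumption \ref{assumption:FGK} (smoothness of $f_Q$, kernel order, bandwidth $r_m \asymp m^{-1/(2\beta+1)}$) together with the bounded-variation condition on $K$ in Assumption \ref{assumption:gK}, which controls the stochastic term via a bracketing/VC argument. Since $f_Q$ is bounded below on $J$, this upgrades to $\sup_{y\in J} |\hat{f}_Q(y)/f_Q(y) - 1| \to 0$.

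The step I expect to be the main obstacle is making the domain bookkeeping in \eqref{eq:fq-split} rigorous: the argument $\hat{T}_{n,m}(x)$ of $\hat{f}_Q$ is random, so I cannot directly invoke a deterministic uniform-consistency statement "at $\hat{T}_{n,m}(x)$" --- I must first establish that, on an event of probability $\to 1$, all values $\{\hat{T}_{n,m}(x) : x\in[a,b]\}$ lie in a fixed compact set $J$ on which $f_Q$ is bounded away from zero (this uses Assumption \ref{assumption:gK} giving positivity on the \emph{closed} interval $[F_Q^{-1}(F_P(a)), F_Q^{-1}(F_P(b))]$, then slightly enlarging it using continuity of $f_Q$ and the uniform convergence $\hat T_{n,m}\to T_0$), and only then apply the uniform density-estimation bound on $J$. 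Once the events are set up correctly, everything reduces to assembling the three uniform convergences via the continuous mapping theorem / Slutsky-type arguments for the sup-norm, which is routine.
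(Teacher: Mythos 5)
Your proposal is correct and follows essentially the same route as the paper's proof: the paper also decomposes the error (additively rather than multiplicatively, working with $\hat{s}_{n,m}-s_\kappa$ and then dividing by the lower-bounded $s_\kappa$) and controls the same three pieces via uniform consistency of $\hat{f}_Q$, uniform consistency of $\hat{T}_{n,m}$ together with uniform continuity of $f_Q$ on the compact image $T_0([a,b])$, and uniform consistency of $\hat{F}_P$; the random-argument bookkeeping you flag is handled in the paper exactly as you describe, by first showing $\hat{f}_Q\circ\hat{T}_{n,m}$ is bounded away from zero a.s. on $[a,b]$. Your concern that $F_P(a)>0$ and $F_P(b)<1$ is implicitly assumed is well spotted: the paper's proof likewise asserts without comment that $A_\kappa$ (and hence $s_\kappa$) is uniformly bounded away from zero on $[a,b]$, which requires exactly that condition.
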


\subsubsection{Bootstrap Approach with Kernel Smoothing and Confidence Band}

We approximate the Gaussian process for which \eqref{eq:estconv_outline} converges by a distribution generated by a bootstrap method.
Specifically, we develop a bootstrap method with kernel smoothing which newly generates samples from the estimated distribution functions $\hat{F}_P$ and $\hat{F}_Q$ by the smooth kernels.
In the bootstrap scheme, we sample $X^*_1,\ldots,X^*_n\sim \hat{F}_P$ and $Y^*_1,\ldots,Y^*_m\sim \hat{F}_Q$. 
Define bootstrap distribution functions $\hat{F}_P^*(x) \coloneqq n^{-1}\sum_{i=1}^n \mone\{X^*_i \leq x \}$ and $\hat{F}_Q^*(y) \coloneqq m^{-1}\sum_{j=1}^m \mone\{Y^*_j \leq y \}$. 
Then, we consider the bootstrap estimator for the OT map $T_0$ as
\begin{align}
    \hat{T}_{n,m}^* \coloneqq \hat{F}^{*-1}_Q(\hat{F}_P^*(x))
\end{align}
Note that $X_i^*$ and $Y_j^*$ are not subsamples of the dataset, but are generated from $\hat{F}_P$ and $\hat{F}_Q$.
This approach is more suitable when we apply the functional delta method to validate a confidence band in our proof.

Using the distribution of the bootstrap estimator $\hat{T}_{n,m}^* $, we derive quantiles of the distribution of the bootstrap version of the estimation error.
Let $\hat{P}_n$ and $\hat{Q}_m$ denote the conditional probability given $X_1,...,X_n$ and  $Y_1,...,Y_m$, respectively.
For any $\alpha \in (0,1)$ define 
\begin{align} \label{def:quantile}
    \hat{q}_{n,m}(1-\alpha) \coloneqq \text{\emph{the $(1-\alpha)$-quantile of }} \sqrt{n+m}\sup_x \frac{\big\lvert\hat{T}_{n,m}^*(x) - \hat{T}_{n,m}(x)\big\rvert}{\hat{s}_{n,m}(x)},
\end{align} 
under $\hat{\Pr}_{n,m} :=  \hat{P}_n \otimes \hat{Q}_m$.
Then, we propose the bootstrap confidence band
\begin{equation} \label{def:conf_band}
\mathcal{C}^{(\alpha)}_{n,m} (x)\coloneqq \left[\hat{T}_{n,m}(x) \pm \frac{\hat{s}_{n,m}(x)\hat{q}_{n,m}(1-\alpha)}{\sqrt{n+m}}\right], ~ x\in [a,b]
\end{equation}
Note that except for the bandwidth $r_n$, this confidence band is computed in a data-driven way.
Also, we will later propose a method to select $r_n$ based on the observed samples.

\begin{remark}[Comparison with a pointwise confidence interval] \label{remark:pointwise}
    Another natural approach is to construct a pointwise confidence interval for the OT map by plugging in the empirical distributions.
    To complement the main study, we present in Section \ref{sec:pointwiseinterval} a methodology for constructing a pointwise confidence interval, along with proof of its asymptotic validity and its empirical evaluations.
    Compared to this pointwise approach, our main proposed kernel-based method has several advantages. 
    First, it produces smooth confidence bands, which benefit interpretability. In contrast, pointwise confidence intervals based on the empirical distributions are riddled with discontinuities and can be difficult to interpret. 
    Second, the uniform confidence band can be used to infer the OT map across the whole domain. For instance, a $95\%$-level uniform confidence band has asymptotically $95\%$ chance to cover the true OT map over the domain. In contrast, a $95\%$-level pointwise confidence interval never contains all, but asymptotically only $95\%$ of the true OT map.
\end{remark}

\begin{remark}[Relation to ROC curves]
    We discuss a relation of the confidence band for OT maps to that for ROC (receiver operating characteristic) curves.
    ROC curves have a similar form $F_P \circ F_Q^{-1}$ and its confidence analysis has been developed by \cite{Horvath2008}.
    As a point of distinction between our study and the existing work, we develop the confidence band whose width differs for each input $x \in [a,b]$. 
    Rigorously, we have introduced the standard deviation $s_\kappa(x)$ and its estimator, then our confidence band achieves the adaptive widths for each input $x$.
    This result is in contrast to the confidence band by \cite{Horvath2008} for ROC curves, which has a constant width independent of $x$.
\end{remark}

\section{Theoretical Result} \label{sec:theory}

In this section, we present the main theoretical contributions of this paper, namely the bootstrap consistency (Theorem~\ref{thm:consistency}) and the asymptotic validity of the confidence band $\mathcal{C}_{n,m}^{(\alpha)}$ (Corollary~\ref{cor:validity_band}).

We first state the theorems in Section~\ref{sec:valband}, and then provide an outline of the proof of Theorem~\ref{thm:consistency} in Section~\ref{sec:proofoutline}. The full proofs of the theorems can be found in Appendix~\ref{sec:proofconsistency} and~\ref{sec:additionalproofs}.

\subsection{Validity of Confidence Band}\label{sec:valband}
We start with a consistency result of the bootstrap estimator. Notice the inclusions of the supremums on the left-hand sides of~\eqref{eq:estconv} and~\eqref{eq:empconv}, which are essential for obtaining a \emph{uniform} confidence band of $T_0$.
\begin{theorem}[Bootstrap consistency]\label{thm:consistency}
    Suppose that the distribution functions $F_P$ and $F_Q$ satisfy Assumption~\ref{assumption:FGK} and~\ref{assumption:gK}. Then, there are independent Brownian bridges $\mathbb{G}_1$ and $\mathbb{G}_2$ such that for any $x_0>0$, the followings holds as $n,m\to\infty$ and $n/(n+m)\to \kappa$:
    \begin{equation}\label{eq:estconv}
        \sqrt{n+m}\sup_x \frac{\big\lvert\hat{T}_{n,m}(x) - T_0(x)\big\rvert }{\hat{s}_{n,m}(x)}\xrightarrow{d} \sup_x \lvert \mathsf{Z}_\kappa(x) \rvert,
    \end{equation}
    and
    \begin{equation}\label{eq:empconv}
        \hat{\Pr}_{n,m}\left(\sqrt{n + m}\sup_x\frac{\big\lvert\hat{T}_{n,m}^*(x)-\hat{T}_{n,m}(x)\big\rvert}{\hat{s}_{n,m}(x)}\leq x_0\right) \xrightarrow{P\otimes Q} \mathbb{P}\Big(\sup_x\lvert \mathsf{Z}_\kappa(x) \rvert \leq x_0\Big),
    \end{equation}
    where
    \begin{align}
        \mathsf{Z}_\kappa(x) &= \frac1{f_Q(T_0(x))s_\kappa(x)}\left[\sqrt{1/\kappa}\mathbb{G}_1(F_P(x)) -\sqrt{1/(1-\kappa)}\mathbb{G}_2(F_Q(x))\right] \\
        &= \frac{\sqrt{1/\kappa}\mathbb{G}_1(F_P(x)) -\sqrt{1/(1-\kappa)}\mathbb{G}_2(F_P(x))}{\sqrt{(\kappa^{-1} + (1-\kappa)^{-1})F_P(x)(1-F_P(x))}}, \label{def:Z_kappa}
    \end{align}
    is a Gaussian process with unit variance for any $x\in \mathbb{R}$.
\end{theorem}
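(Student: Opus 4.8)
The plan is to deduce Theorem~\ref{thm:consistency} from the Bahadur representation (Proposition~\ref{prop:smootherror}), the consistency of the plug-in standard deviation (Lemma~\ref{lemma:sestimate}), and the functional delta method applied to the composition map $(F,G)\mapsto G^{-1}\circ F$. The first step is to rewrite the right-hand side of \eqref{eq:smoothbahadur} as an empirical process evaluated at the smoothed kernel functions. Writing $\hat{\bG}_P^K(x)=\sqrt n\bigl(\hat F_P(x)-F_P(x)\bigr)$ and $\hat{\bG}_Q^K(x)=\sqrt m\bigl(\hat F_Q(x)-F_Q(x)\bigr)$ for the smoothed empirical processes, the leading term of $\sqrt{n+m}(\hat T_{n,m}(x)-T_0(x))$ equals
\begin{align}
\frac1{f_Q(T_0(x))}\left[\sqrt{\tfrac{n+m}{n}}\,\hat{\bG}_P^K(x) - \sqrt{\tfrac{n+m}{m}}\,\hat{\bG}_Q^K(T_0(x))\right] + o_{P\otimes Q}(1).
\end{align}
By Corollary~2 of \cite{Gin2007} — invoked precisely because the kernel order exceeds $\beta+1/2$ and the bandwidth is $n^{-1/(2\beta+1)}$ — the smoothed empirical processes $\hat{\bG}_P^K$ and $\hat{\bG}_Q^K$ converge in $D[a,b]$ (resp.\ on the image interval) to $F_P$-Brownian-bridge and $F_Q$-Brownian-bridge limits, jointly and independently across the two samples since $X$'s and $Y$'s are independent; reparametrising the bridges as $\bG_1\circ F_P$ and $\bG_2\circ F_Q$ with $\bG_1,\bG_2$ standard Brownian bridges on $[0,1]$ gives the processes in \eqref{def:Z_kappa}. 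Using $n/(n+m)\to\kappa$ so that $(n+m)/n\to 1/\kappa$ and $(n+m)/m\to 1/(1-\kappa)$, together with $f_Q(T_0(x))$ being bounded away from $0$ on $[a,b]$ by Assumption~\ref{assumption:gK}, the continuous mapping theorem applied to the sup-norm functional yields
\begin{align}
\sqrt{n+m}\sup_x\frac{\bigl\lvert\hat T_{n,m}(x)-T_0(x)\bigr\rvert}{s_\kappa(x)}\xrightarrow{d}\sup_x\lvert\mathsf Z_\kappa(x)\rvert.
\end{align}
Replacing $s_\kappa(x)$ by $\hat s_{n,m}(x)$ then costs only an $o_{P\otimes Q}(1)$ term by Lemma~\ref{lemma:sestimate} (write the ratio $\hat s_{n,m}/s_\kappa$ as $1+o_{P\otimes Q}(1)$ uniformly and absorb), proving \eqref{eq:estconv}. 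That $\mathsf Z_\kappa(x)$ has unit variance is the direct computation $\Var(\sqrt{1/\kappa}\,\bG_1(F_P(x))-\sqrt{1/(1-\kappa)}\,\bG_2(F_P(x)))=(\kappa^{-1}+(1-\kappa)^{-1})F_P(x)(1-F_P(x))$ using independence and $\Var\bG_i(u)=u(1-u)$, which exactly cancels the denominator.

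For the bootstrap statement \eqref{eq:empconv}, the plan is to run the same argument conditionally on the data. Conditionally on $X_1,\dots,X_n$ and $Y_1,\dots,Y_m$, the bootstrap sample $X_1^*,\dots,X_n^*$ is i.i.d.\ from $\hat F_P$ and $Y_1^*,\dots,Y_m^*$ i.i.d.\ from $\hat F_Q$, which are themselves smooth and (by the uniform consistency $\|\hat F_P-F_P\|_\infty\to 0$, $\|\hat f_Q-f_Q\|_\infty\to 0$ a.s., standard for the chosen kernel/bandwidth regime) close to the truth. One establishes a conditional Bahadur representation for $\sqrt{n+m}(\hat T_{n,m}^*(x)-\hat T_{n,m}(x))$ with the same structure, now driven by the (unsmoothed) bootstrap empirical processes $\sqrt n(\hat F_P^*-\hat F_P)$ and $\sqrt m(\hat F_Q^*-\hat F_Q)$, whose conditional weak limits are again independent $\hat F_P$- and $\hat F_Q$-Brownian bridges by the conditional multiplier/bootstrap CLT in $D[a,b]$; since $\hat F_P\to F_P$ and $\hat F_Q\to F_Q$ uniformly and $\hat f_Q\to f_Q$ uniformly, these conditional limits converge (in the bounded-Lipschitz metric, $P\otimes Q$-a.s.) to the same $\bG_1\circ F_P$, $\bG_2\circ F_Q$. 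Applying the functional delta method for the bootstrap (e.g.\ the Hadamard-differentiability transfer of van der Vaart–Wellner) to the map $(F,G)\mapsto G^{-1}\circ F$ at $(F_P,F_Q)$, and dividing by $\hat s_{n,m}(x)$ which is uniformly consistent for $s_\kappa(x)$, gives conditional weak convergence of the bootstrapped statistic to $\sup_x\lvert\mathsf Z_\kappa(x)\rvert$ in $P\otimes Q$-probability; since the limit law has a continuous distribution function (the supremum of a nondegenerate Gaussian process), this upgrades to the pointwise-in-$x_0$ convergence of distribution functions in \eqref{eq:empconv} via Pólya's theorem.

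The main obstacle is handling the quantile map $G\mapsto G^{-1}$ and the composition uniformly on $[a,b]$: Hadamard differentiability of $G\mapsto G^{-1}$ requires $G$ to have a density bounded away from zero on the relevant range, which is exactly why Assumption~\ref{assumption:gK} restricts attention to $[F_Q^{-1}(F_P(a)),F_Q^{-1}(F_P(b))]$ rather than all of $\R$, and one must check that $\hat F_P([a,b])$ eventually lies in a slightly enlarged version of this interval so the composition is well-defined and the derivative formula $-f_Q(T_0(x))^{-1}$ is correct. A related subtlety is that the delta method must be applied to the \emph{pair} $(\hat F_P,\hat F_Q)$ jointly in the product space $D[a,b]\times D[\text{image}]$, tracking the independence of the two coordinates through the Hadamard derivative, and ensuring the $o_{P\otimes Q}(1)$ remainders from Proposition~\ref{prop:smootherror} survive the sup over $x$ — which they do because the proposition's remainder is already established uniformly under the stated assumptions. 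Everything else (continuous mapping for $\sup_x$, Slutsky-type replacement of $s_\kappa$ by $\hat s_{n,m}$, the variance computation) is routine.
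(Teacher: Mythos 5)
Your plan is correct and follows essentially the same route as the paper: Bahadur representation / functional delta method for the kernel estimator, conditional weak convergence of the bootstrap processes, delta method for bootstrap, Slutsky to swap $s_\kappa$ for $\hat s_{n,m}$, and a final continuity argument. The one place you gloss over a genuine technical choice is the conditional CLT for the bootstrap empirical processes. Because the bootstrap samples $X_i^*$ are drawn from the \emph{smoothed} $\hat F_P$ (not resampled from the empirical measure), the off-the-shelf empirical/multiplier bootstrap CLT does not directly apply: one must handle a triangular array of samples from a data-dependent, changing continuous distribution. The paper sidesteps this by invoking the Koml\'os--Major--Tusn\'ady strong approximation: conditional on the data, $\hat F_P$ is a fixed continuous distribution, $U_i := \hat F_P(X_i^*)$ are i.i.d.\ uniform, and KMT gives an a.s.\ Brownian-bridge coupling of the uniform empirical process, which is then reparametrised back through $\hat F_P$ and transferred to $F_P$ using $\|\hat F_P - F_P\|_\infty \to 0$ and the uniform continuity of the bridge. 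Your invocation of ``the conditional multiplier/bootstrap CLT'' would need this kind of argument spelled out; without it the step is not fully justified for a smooth bootstrap. Also, the upgrade from bounded-Lipschitz convergence of conditional laws to pointwise convergence of conditional distribution functions is carried out in the paper by a bracketing of indicators between Lipschitz functions (together with continuity of $\mathbb P(\sup_x|\mathsf Z_\kappa(x)|\le \cdot)$, proved via an anti-concentration inequality); citing P\'olya's theorem is not quite right here since P\'olya only converts pointwise to uniform convergence of CDFs, not weak to pointwise. These are refinements of implementation rather than errors in strategy; the overall architecture of your argument matches the paper's.
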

This theorem implies that the supremum values of both the scaled estimation error by the kernel estimator and the estimation error by the bootstrap estimator converge in distribution to the supremum of the same Gaussian process. 
In essence, the convergence \eqref{eq:estconv} is the intermediate result on the estimation error of the kernel method, and the convergence \eqref{eq:empconv} additionally provides the convergence of the bootstrap method.

Based on this result, we show the asymptotic validity of the proposed confidence band:
\begin{corollary}[Asymptotic Validity of Bootstrap Confidence Band] \label{cor:validity_band}
    Consider the proposed confidence band $\mathcal{C}^{(\alpha)}_{n,m}$ in \eqref{def:conf_band}.
    Then, under Assumption~\ref{assumption:FGK} and~\ref{assumption:gK}, the bootstrap confidence band $\mathcal{C}^{(\alpha)}_{n,m}$
    is asymptotically uniformly consistent at level $1-\alpha$, that is, it holds that
    \[\mathbb{P}\left(T_0(x) \in \mathcal{C}^{(\alpha)}_{n,m}(x), \forall x\in[a,b]\right) \to 1-\alpha,\]
    as $n,m\to\infty$ and $n/(n+m)\to \kappa$.
\end{corollary}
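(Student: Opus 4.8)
The plan is to convert the uniform coverage statement into a single distributional statement about the studentized supremum of Theorem~\ref{thm:consistency}, and then to combine the two limits \eqref{eq:estconv} and \eqref{eq:empconv} with elementary facts about the limiting Gaussian supremum. First I would reduce the claim: by the explicit form of the band in \eqref{def:conf_band}, for any fixed realization of the two samples the event $\{T_0(x)\in\mathcal{C}^{(\alpha)}_{n,m}(x)\ \forall x\in[a,b]\}$ coincides with $\{\widehat{W}_{n,m}\le\hat{q}_{n,m}(1-\alpha)\}$, where
\[
  \widehat{W}_{n,m}:=\sqrt{n+m}\,\sup_{x\in[a,b]}\frac{\lvert\hat{T}_{n,m}(x)-T_0(x)\rvert}{\hat{s}_{n,m}(x)}
\]
is precisely the statistic on the left-hand side of \eqref{eq:estconv} and $\hat{q}_{n,m}(1-\alpha)$ is the bootstrap quantile from \eqref{def:quantile}. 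Thus it suffices to prove $\Pr(\widehat{W}_{n,m}\le\hat{q}_{n,m}(1-\alpha))\to 1-\alpha$.

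Next I would record the properties of the limit law that make this work. Write $W:=\sup_{x\in[a,b]}\lvert\mathsf{Z}_\kappa(x)\rvert$ and $G(t):=\Pr(W\le t)$. Since $\mathsf{Z}_\kappa$ is a sample-continuous Gaussian process on the compact interval $[a,b]$ with unit variance at every point, it is non-degenerate, and classical results on the distribution of the supremum of a Gaussian process show that $G$ is continuous on $\R$ and strictly increasing on $(0,\infty)$ (the law of such a supremum has no atom except possibly at its essential infimum, which here equals $0$ and carries no mass because $\Pr(\mathsf{Z}_\kappa\equiv 0)=0$). Hence $q_\alpha:=\inf\{t:G(t)\ge 1-\alpha\}$ lies in $(0,\infty)$, is the unique solution of $G(t)=1-\alpha$, and every $t>0$ is a continuity point of $G$. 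Moreover, writing $\hat{G}_{n,m}(t):=\hat{\Pr}_{n,m}\bigl(\sqrt{n+m}\sup_x\lvert\hat{T}_{n,m}^*(x)-\hat{T}_{n,m}(x)\rvert/\hat{s}_{n,m}(x)\le t\bigr)$ for the conditional distribution function on the left-hand side of \eqref{eq:empconv}, that display states exactly that $\hat{G}_{n,m}(t)\xrightarrow{P\otimes Q}G(t)$ for each $t>0$.

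Then I would conclude as follows. From the monotonicity of $\hat{G}_{n,m}$ together with $\hat{G}_{n,m}(t)\xrightarrow{P\otimes Q}G(t)$ at $t=q_\alpha\pm\epsilon$ and $G(q_\alpha-\epsilon)<1-\alpha<G(q_\alpha+\epsilon)$ for all small $\epsilon>0$, the standard quantile-convergence lemma (or a direct squeeze using only the continuity of $G$) gives $\hat{q}_{n,m}(1-\alpha)\xrightarrow{P\otimes Q}q_\alpha$. Combining this with $\widehat{W}_{n,m}\xrightarrow{d}W$ from \eqref{eq:estconv} via Slutsky's theorem yields $\widehat{W}_{n,m}-\hat{q}_{n,m}(1-\alpha)\xrightarrow{d}W-q_\alpha$; evaluating the limiting distribution function at the continuity point $0$ (using that $q_\alpha>0$ is a continuity point of $G$) gives $\Pr(\widehat{W}_{n,m}\le\hat{q}_{n,m}(1-\alpha))\to\Pr(W\le q_\alpha)=G(q_\alpha)=1-\alpha$, which by the first step is the assertion of the corollary.

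I expect the only genuinely non-routine ingredient, given Theorem~\ref{thm:consistency}, to be the continuity (and strict monotonicity near $q_\alpha$) of $G$ in the second step: this anti-concentration property is what turns the two convergences \eqref{eq:estconv}--\eqref{eq:empconv} into the exact asymptotic level $1-\alpha$ rather than a mere inequality, and it relies on the non-degeneracy of $\mathsf{Z}_\kappa$ (guaranteed by its unit variance) together with classical facts on Gaussian suprema, namely positivity of small-ball probabilities and Tsirelson's theorem on the absolute continuity of the law of the maximum. The remaining steps are standard Slutsky and quantile-convergence bookkeeping.
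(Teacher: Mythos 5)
Your proof is correct, and the skeleton matches the paper's argument (reduce coverage to the event $\{\widehat{W}_{n,m}\le\hat q_{n,m}(1-\alpha)\}$, establish convergence of the bootstrap quantile, then Slutsky plus the continuity of the limiting supremum law from Lemma~\ref{lemma:cont}). Where you diverge is in how you get quantile convergence for an \emph{arbitrary} level $\alpha$. The paper establishes only that $G(t)=\Pr(\sup_x|\mathsf{Z}_\kappa(x)|\le t)$ is continuous (via the anti-concentration inequality), passes to a subsequence of a subsequence to upgrade the conditional convergence of $\hat G_{n,m}$ to almost-sure, invokes \cite[Lemma 21.2]{Vaart1998} to get $\hat q_{n,m}(1-\alpha)\to q_{\mathsf Z}(1-\alpha)$ only at the (at most countably many) continuity points of the monotone function $q_{\mathsf Z}$, and then squeezes general $\alpha$ between nearby continuity points $\alpha_L<\alpha<\alpha_U$. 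You instead assert that $G$ is \emph{strictly} increasing on $(0,\infty)$, which makes $q_{\mathsf Z}$ continuous on all of $(0,1)$ and kills the squeeze step. That assertion is true, but it is a genuinely stronger fact than the continuity the paper proves, and you lean on external Gaussian theory (Tsirelson's absolute-continuity theorem plus positivity of small-ball probabilities) to get it; if you take this route you should also make explicit the standard reduction that handles the absolute value, namely applying Tsirelson's theorem to the Gaussian process $(x,\varepsilon)\mapsto\varepsilon\,\mathsf{Z}_\kappa(x)$ on $[a,b]\times\{-1,+1\}$, whose supremum is exactly $\sup_x|\mathsf{Z}_\kappa(x)|$. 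So: same endpoint, same Slutsky finish, but the paper's argument is more self-contained (it needs only what Lemma~\ref{lemma:cont} delivers and a monotonicity bookkeeping step), whereas yours is shorter once one grants the strict monotonicity, at the cost of importing a heavier result from Gaussian process theory.
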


This result shows that our confidence bands are asymptotically valid in a uniform sense, that is, the OT map $T_0$ is included in our confidence band for every input $x$ simultaneously with the probability.

\subsection{Proof Outline of Theorem \ref{thm:consistency}}\label{sec:proofoutline}
Our proof consists of two parts: the first is the convergence of the estimated distributions by the kernel of the developed bootstrap method, and the second is the convergence of an application of the functional delta method. 
The details are described below.

\subsubsection{Convergence of Estimated Distributions}
As a preparation, we first derive limiting Gaussian processes of the distributions $ \hat{F}_P, \hat{F}_Q, \hat{F}_P^*$, and $\hat{F}_Q^*$.

We first describe the analysis of the distributions $\hat{F}_P^*$ and $\hat{F}_Q^*$ by the bootstrap method with kernel smoothing.
Rigorously, the central limit theorem in \cite{Komls1975} shows that there exists a Browninan bridge $\mathbb{G}_1$ such that the following holds for every $\delta > 0$:
    \begin{equation}
        \hat{P}_n\left(\sup_{x}\left\lvert \sqrt{n+m}(\hat{F}_P^*(x) - \hat{F}_P(x)) - \sqrt{1/\kappa}\mathbb{G}_1(F_P(x))\right\rvert >\delta\right) \xrightarrow{P\text{-a.s.}} 0,
    \end{equation}
as $n \to \infty$.
This shows that the error $\hat{F}_P^*(x) - \hat{F}_P(x)$ by the bootstrap with kernel smoothing converges to the Brownian bridge with the proper scaling in the uniform norm sense.
For technical reasons in further proof, we also derive the convergence in the sense of bounded Lipschitz metrics, that is, we obtain
    \begin{equation}
        \sup_{h\in BL_1(\ell^{\infty}(\mathcal{F}))}\left\lvert \mathbb{E}_n h(\sqrt{n+m}(\hat{F}_P^* - \hat{F}_P)) - \mathbb{E} h(\sqrt{1/\kappa}\mathbb{G}_1\circ F_P) \right\rvert \xrightarrow{P\text{-a.s.}} 0,
    \end{equation}
as $n \to \infty$. 
Here, we denote by $\mathbb{E}_n$ the expectation with respect to $\hat{P}_n$.
Similarly, we can obtain a similar limiting statement for the error $\hat{F}_Q^*(x) - \hat{F}_Q(x)$ by the other distribution by the bootstrap.

Next, we also analyze the error by the estimated distributions $\hat{F}_P$ and $\hat{F}_Q$ by the kernel method.
We apply the seminal analysis on the convergence of kernel convolutions \cite{Gin2007} and obtain the following  joint convergence:
\begin{equation}
    \sqrt{n+m}(\hat{F}_P - F_P, \hat{F}_Q - F_Q) \xrightarrow{d} (\sqrt{1/\kappa}\mathbb{G}_1\circ F_P,\sqrt{1/(1-\kappa)}\mathbb{G}_2\circ F_Q).
\end{equation}
as $n \to \infty$.
Here, $\mathbb{G}_1$ and $\mathbb{G}_2$ are some independent Brownian bridges.

\subsubsection{Functional Delta Method}
We study the convergence of the estimator $\hat{T}_{n,m}^*$ by using the above convergence results of the distributions and the representation \eqref{eq:form_otmap} of the OT map.
To the aim, we use the functional delta method (see Appendix~\ref{sec:tools} for a brief exposition).

Formally, we define a functional $\phi:D[a,b]\times D[a,b] \to D[a,b]$ as
\begin{align}
    \phi(u,v) = (v^{-1}\circ u(\cdot)),
\end{align}
which implies that $\hat{T}_{n,m}^* = \phi(\hat{F}_P^*, \hat{F}_Q^*)$, $\hat{T}_{n,m} = \phi(\hat{F}_P, \hat{F}_Q)$, and $T_0 = \phi(F_P, F_Q)$.
Then, we shall make a first-order approximation $\phi(\hat{F}_P,\hat{F}_Q) - \phi(F_P,F_Q) \approx \phi'(\hat{F}_P-F_P,\hat{F}_Q-F_Q)$. 
Thus it is important to first derive the \emph{Hadamard} derivative $\phi'$.
\begin{lemma}\label{lemma:hadamarddiff}
    Let $[a,b]$ satisfy the conditions in Proposition~\ref{prop:error}. Define the functional $\phi: D[a,b]\times D[a,b]\to D[a,b]$ by $\phi(u,v) = v^{-1}\circ u$.
    Then, $\phi$ is Hadamard differentiable at $(F_P,F_Q)$. Denoting $T_0=F_Q^{-1} \circ F_P$, the Hadamard derivative of $\phi$ at $(F_P,F_Q)$ is given by
    \begin{equation}\label{eq:derivative}
        \phi'(u, v) = \frac{1}{f_Q\circ T_0}\left[u - v\circ T_0\right].
    \end{equation}
\end{lemma}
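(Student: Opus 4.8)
I would prove Hadamard differentiability of $\phi(u,v) = v^{-1}\circ u$ at $(F_P,F_Q)$ by decomposing it into two maps whose Hadamard differentiability is classical, and then applying the chain rule for Hadamard derivatives. Write $\phi = \psi_2 \circ \psi_1$, where $\psi_1(u,v) = (u, v^{-1})$ composes the second coordinate with the inversion map, and $\psi_2(u,w) = w \circ u$ is the composition map. The inversion map $v \mapsto v^{-1}$ is Hadamard differentiable at $F_Q$ (which is continuously differentiable with strictly positive derivative $f_Q$ on the relevant interval, by Assumptions~\ref{assumption:FGK} and~\ref{assumption:gK}) with derivative $v \mapsto -(v\circ F_Q^{-1})/(f_Q\circ F_Q^{-1})$; this is the standard quantile-functional delta method result (see e.g.\ van der Vaart, \emph{Asymptotic Statistics}, Lemma 21.3). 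The composition map $(u,w)\mapsto w\circ u$ is Hadamard differentiable at a pair where $w$ is continuously differentiable, with derivative $(u,w)\mapsto w\circ(\text{base }u) \cdot (\text{base }w)' + (\text{base }w)\circ\text{stuff}$ — more precisely, at $(u_0, w_0)$ with $w_0$ differentiable, the derivative sends $(h,k)$ to $w_0'(u_0)\,h + k\circ u_0$.

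Assembling via the chain rule: the outer map $\psi_2$ is evaluated at $(F_P, F_Q^{-1})$, and $F_Q^{-1}$ is continuously differentiable on the relevant interval with derivative $1/(f_Q\circ F_Q^{-1})$; the inner perturbation is $(u, -(v\circ F_Q^{-1})/(f_Q\circ F_Q^{-1}))$. Composing, the derivative of $\phi$ at $(F_P,F_Q)$ applied to $(u,v)$ is
\[
(F_Q^{-1})'(F_P)\cdot u \;-\; \frac{v\circ F_Q^{-1}}{f_Q\circ F_Q^{-1}}\circ F_P
\;=\; \frac{u}{f_Q\circ F_Q^{-1}\circ F_P} \;-\; \frac{v\circ F_Q^{-1}\circ F_P}{f_Q\circ F_Q^{-1}\circ F_P}
\;=\; \frac{1}{f_Q\circ T_0}\big[u - v\circ T_0\big],
\]
which is exactly \eqref{eq:derivative}. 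I would double-check that the ranges line up: since we restrict attention to $x\in[a,b]$ and $F_Q^{-1}$ is applied to values $F_P(x)\in[F_P(a),F_P(b)]$, the composition $F_Q^{-1}\circ F_P$ lands inside the interval $[F_Q^{-1}(F_P(a)), F_Q^{-1}(F_P(b))]$ on which $f_Q$ is positive and (Hölder-)smooth, so all the classical differentiability hypotheses genuinely hold; this is presumably the content of the "conditions in Proposition~\ref{prop:error}" being invoked.

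\textbf{Main obstacle.} The delicate point is the Hadamard differentiability of the \emph{inversion} map in the space $D[a,b]$ with the uniform norm, and making sure the perturbation directions are admissible — the classical quantile result requires $F_Q$ to be continuously differentiable with bounded, strictly positive derivative on a neighborhood of $[F_Q^{-1}(F_P(a)), F_Q^{-1}(F_P(b))]$, and one must verify that the perturbing sequences $v_t \to v$ stay in a suitable subspace (e.g.\ that the limit $v$ is continuous, which holds since the relevant Brownian bridges composed with continuous $F_P$ are continuous a.s.). A secondary subtlety is tangential differentiability: $\phi$ is Hadamard differentiable only \emph{tangentially} to the subspace of continuous functions, and one must check this is exactly the subspace to which the limiting process in Theorem~\ref{thm:consistency} belongs, so that the chain rule applies cleanly. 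I would handle this by stating the inversion lemma with its precise domain-of-validity hypotheses, verifying those hypotheses from Assumptions~\ref{assumption:FGK}–\ref{assumption:gK} on the interval $[F_Q^{-1}(F_P(a)), F_Q^{-1}(F_P(b))]$, and then invoking the chain rule for Hadamard-differentiable maps (van der Vaart, Theorem 20.9), keeping track of the tangent spaces at each stage.
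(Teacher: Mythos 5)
Your proposal is correct and rests on the same key ingredient as the paper's proof: the Hadamard differentiability of the quantile (inversion) map $v\mapsto v^{-1}$, cited from van der Vaart's Lemma 21.3. The organizational difference is mild: the paper verifies Hadamard differentiability directly by splitting the difference quotient
\begin{equation}
\frac{F^{-1}_{Q,t}\circ F_{P,t} - F_Q^{-1}\circ F_P}{t}
= \frac{F^{-1}_{Q,t}\circ F_{P,t} - F_Q^{-1}\circ F_{P,t}}{t}
+ \frac{F_Q^{-1}\circ F_{P,t} - F_Q^{-1}\circ F_P}{t},
\end{equation}
handling the first term with Lemma 21.3 and the second with a Taylor expansion of $F_Q^{-1}$ justified by the inverse function theorem, then combining using continuity of $v\circ F_Q^{-1}$ and $f_Q\circ F_Q^{-1}$. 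You instead package exactly this two-term split as a chain rule for the factorization $\phi = \psi_2\circ\psi_1$ with $\psi_1(u,v) = (u, v^{-1})$ and $\psi_2(u,w) = w\circ u$, invoking Hadamard differentiability of the composition map $\psi_2$ at a continuously differentiable base $w_0 = F_Q^{-1}$. The two are the same computation; your chain-rule framing is a bit more modular, and you are more explicit about the tangential-differentiability subtlety (which the paper handles implicitly by using continuity of the perturbation direction $v$ in the final step), but neither buys anything the other lacks.
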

We slightly extend this derivative for the design of confidence bands.
Define a functional $\Psi:D[a,b]\times D[a,b] \to D[a,b]$ as
    \begin{align}
        \Psi(u,v) = (v^{-1}\circ u(\cdot))/s_\kappa(\cdot).
    \end{align}
    Using Lemma \ref{lemma:hadamarddiff}, we derive its derivative as
     \begin{align}
         \Psi'(u,v) = (u(\cdot) - v \circ T_0(\cdot))/((f_Q\circ T)(\cdot) s_\kappa (\cdot)).
     \end{align}
     Note that we have added the term $s_\kappa(\cdot)$, which determines the scale of the confidence band.

Finally, we apply the functional delta method (Lemma \ref{lemma:deltaboot} in Appendix) and study the limit of the estimation error of the estimator $\hat{F}^*_{P}$ as
\begin{align}
    \sqrt{n+m}(\hat{T}_{n,m}^*-\hat{T}_{n,m})(\cdot)/s_\kappa(\cdot) &= \sqrt{n+m}(\Psi(\hat{F}^*_{P},\hat{F}^*_{Q}) - \Psi(\hat{F}_P,\hat{F}_Q)) \\
    &\xrightarrow{d} \Psi'(\sqrt{1/\kappa}\mathbb{G}_1\circ F_P, \sqrt{1/(1-\kappa)}\mathbb{G}_2\circ F_Q) \\
    &= \mathsf{Z}_\kappa,
\end{align}
where $Z_\kappa$ is defined in \eqref{def:Z_kappa}.
By a similar discussion, we also prove that the estimation error $\hat{T}_{n,m} - T_0$ of the kernel estimator also converges to the same Gaussian process.
In addition, we give evaluations of several plug-in estimators such as $\hat{s}_{n,m}(\cdot)$, then obtain the statement of Theorem \ref{thm:consistency}.

\section{Simulation} \label{sec:simulation}

\subsection{Simulation design}\label{sec:simdesign}

To support our asymptotic validity results, we perform a Monte Carlo simulation to evaluate the coverage probabilities of the confidence bands. For 1,000 iterations, we sample from two different probability distributions: $X_1,\ldots, X_n\sim N(0,1)$ and $Y_1,\ldots, Y_m\sim \operatorname{Gamma}(5, 0.5)$, where $n\in\{100,200,\ldots,2000\}$ and $m = n/4$ (that is, $\kappa=0.2$). In each iteration, we use 2,500 bootstrap samples to construct $(1-\alpha)$-level uniform confidence bands on the interval $[-2.5, 2.5]$, where $\alpha\in\{0.01,0.05,0.10\}$. The true OT map can be directly computed as $T_0=F_Q^{-1}\circ F_P$ where $F_P$ and $F_Q$ are the distribution function of $N(0,1)$ and $\operatorname{Gamma}(5,0.5)$, respectively. 
The Gaussian kernel is used for the uniform confidence bands with various smoothness parameters $\beta\in \{0.5, 1.0, 1.4\}$ and $r_n=\frac1{2}n^{-1/(2\beta+1)}$; so if we assume that the density functions are in $C^{\beta}(\R)$, then Assumption~\ref{assumption:FGK} and~\ref{assumption:gK} are satisfied.

To evaluate our confidence bands, we estimate the coverage probability as the proportion of 1000 runs in which $T_0(x)$ is contained in the confidence band \emph{for all} $x\in [-2.5, 2.5]$.
Additionally, we assess the confidence bands by calculating the median of the bands' average widths.

\subsection{Result and discussion}\label{sec:result}

\begin{table}[]
\begin{tabular}{crrcc}
\hline
$1-\alpha$              & $n$  & $m$ & Average width & Coverage probability \\ \hline
\multirow{3}{*}{0.90} & 200  & 50  & 29.13         & 0.656                \\
                      & 700  & 175 & 10.85         & 0.866                \\
                      & 2000 & 500 & 6.23          & \textbf{0.932}       \\ \hline
\multirow{3}{*}{0.95} & 200  & 50  & 35.67         & 0.668                \\
                      & 700  & 175 & 12.26         & 0.888                \\
                      & 2000 & 500 & 7.68          & \textbf{0.960}       \\ \hline
\multirow{3}{*}{0.99} & 200  & 50  & 40.94         & 0.737                \\
                      & 700  & 175 & 15.13         & 0.951                \\
                      & 2000 & 500 & 10.77         & \textbf{0.989}       \\ \hline
\end{tabular}
\caption{Evaluations of our $(1-\alpha)$-level uniform confidence bands of the optimal transport map from $N(0,1)$ to $\operatorname{Gamma}(5,0.5)$ based on 1,000 pairs of samples from each distribution. The table displays the median of average widths and the coverage probabilities of the confidence bands on $[-2.5,2.5]$. \label{table:uniform}}
\end{table}

The median of per-iteration average widths and the coverage probabilities over $x\in[-2.5,2.5]$ for $\beta=0.5$ and $n=200,700$ and $2000$ are shown in Table~\ref{table:uniform}. From the table, we can see that the coverage probabilities approach the nominal probabilities ($1-\alpha$), and the widths become smaller as $n$ increases. In particular, when $n=2000$ and $\alpha=0.90$ or $0.95$, the coverage probabilities are slightly larger than the nominal probabilities.

\begin{figure}
    \centering
    \includegraphics[width=0.95\hsize]{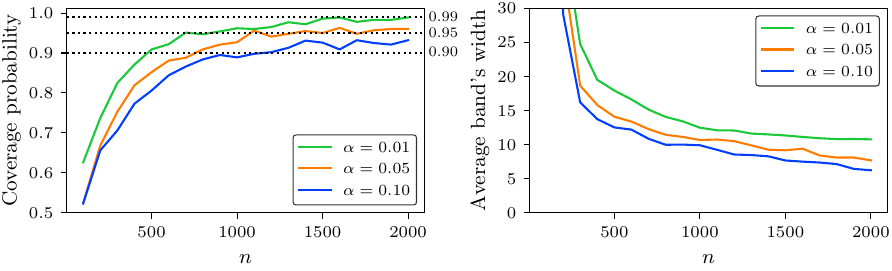}
    \caption{Plots of the coverage probabilities (left) and the median of average widths (right) of the simulated uniform confidence bands on $[-2.5,2.5]$ as functions of sample size $n$.}
    \label{fig:uniform1}
\end{figure}

The plots in Figure~\ref{fig:uniform1} illustrate the coverage probability and median of average width as functions of $n$. These plots lead us to the same conclusion: as $n$ increases, the average coverage probabilities approach the nominal probabilities, and the width of the band decreases.

\begin{figure}
    \centering\includegraphics[width=0.95\hsize]{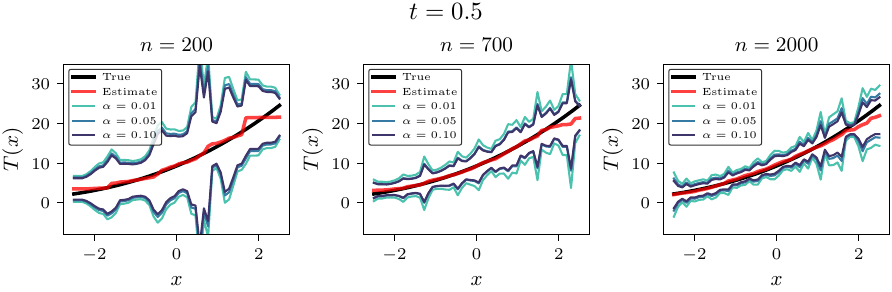}\\ \includegraphics[width=0.95\hsize]{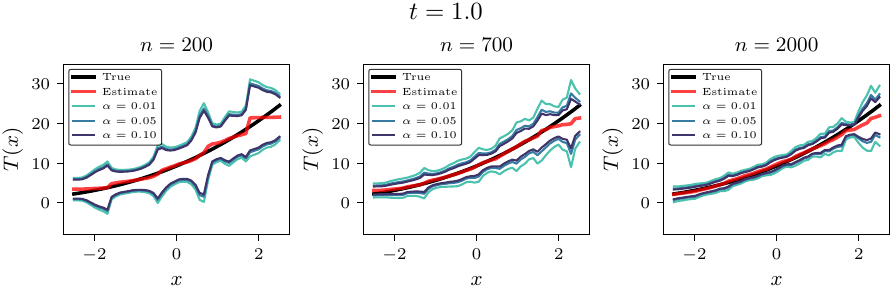}\\ \includegraphics[width=0.95\hsize]{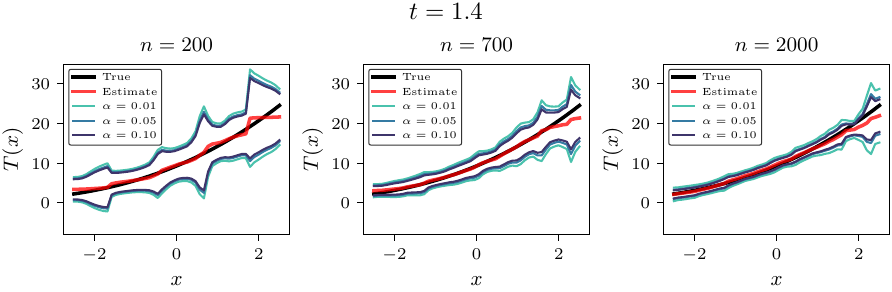}
    \caption{Examples of $(1-\alpha)$-level uniform confidence bands on $[-2.5,2.5]$, for three different values of $\alpha$ and three different values of the bandwidth parameter $\beta$, based on specific samples of size $200, 700$ and $2000$.}
    \label{fig:uniform2}
\end{figure}

We now examine the uniform confidence bands of a specific sample with $\beta=0.5, 1.0$ and $1.4$ (recall Assumption~\ref{assumption:FGK} that $\beta+0.5$ must be less than the order of the Gaussian kernel, which is $2$). The plots of the true optimal transport maps, their kernel estimates, and the uniform confidence bands are shown in Figure~\ref{fig:uniform2}. We observe that for $x>1.8$, the estimated transport map (the red curve) remains significantly distant from the actual transport map (the black curve). This disparity arises due to the heavier right tail of $\operatorname{Gamma}(5,0.5)$ compared to that of $N(0,1)$. Consequently, there is an inadequate number of sample points on the right tail of $N(0,1)$ to estimate the transport map between the two distributions. 

As $\beta$ increases, the kernel bandwidths increase and the confidence bands become smoother. Note that if the actual density functions are rougher than $C^\beta(\mathbb{R})$, the kernel estimate and the confidence band might be too smooth.

\section{Real data analysis} \label{sec:real_data}

As an application, we use our confidence bands to assess the uncertainty of our estimate of the transport map of the distribution of ages of death in 2001 to those in 2021. The data of the age of deaths from 12 countries were taken from the Human Mortality Database~\cite{HMD}. For each country, let us simply denote the dataset from the year 2001 by $X$, and those from the year 2021 by $Y$. Let $\lvert X \rvert=n$ and $\lvert Y \rvert = m$. Assume that the observed age of deaths in 2001 and 2021 are sampled from two separate continuous probability distributions. Then there is some uncertainty in our estimate due to randomness in the sampling. 

To construct the estimators and confidence bands at level $1-\alpha$, we use the Gaussian kernel. Our choices of bandwidths are guided by our theory in Section~\ref{sec:band}. Recall from Assumption~\ref{assumption:FGK} that $r_n\approx n^{-1/(2\beta+1)}$ where $\beta+0.5$ must be less than the order of the kernel. Since the Gaussian kernel is of second order, any $\beta < 1.5$ is permissible. In particular, we choose $\beta=1.25$ so that $2\beta+1 = 3.5$; this leads to our choices of kernel bandwidths $r_n = 2n^{-1/3.5}$ for $X$, and $r_m = 2m^{-1/3.5}$ for $Y$. With these bandwidths, we use the method in Section~\ref{sec:band} to construct a kernel estimate of the optimal transport map and a $95\%$ uniform confidence band for each country.

\begin{figure}
    \centering
    \includegraphics[width=0.95\hsize]{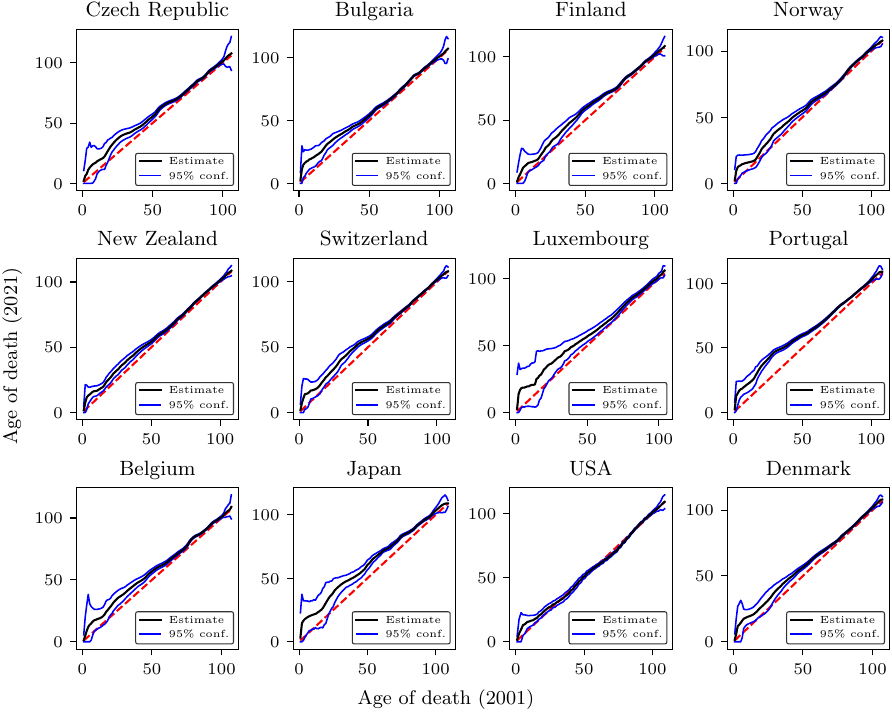}
    \caption{Analysis of the distribution shifts in ages of death from the year 2001 to 2021 using our uniform confidence bands.}
    \label{fig:mortality}
\end{figure}

The plots of our estimates and confidence bands for 12 countries are shown in Figure~\ref{fig:mortality}. The kernel estimate of the optimal transport map is the black curve, showing the correspondence between the age of deaths in 2001 and 2021. The identity function is the red dashed line. The estimate lying above the identity function indicates a shift in mortality towards higher ages. From these estimates and the confidence bands, we observe the most significant age shifts in Portugal (from $30$ to $45$) and Japan (from $30$ to $40$). Other countries also show minor age shifts, except for Norway, New Zealand, Luxembourg, and the USA, where we cannot confidently assert an upward shift in ages.

\section{Conclusion} \label{sec:conclusion}

In this paper, we develop a method to construct uniform confidence bands for the optimal transport maps based on two samples from two unknown continuous distributions. First, we use a kernel to estimate the densities, and then we use the empirical bootstrap to construct the confidence bands. We show that our confidence bands are asymptotically valid, meaning that they contain the actual OT map on an interval with a probability that approaches the nominal coverage probability.
We perform simulations to verify the validity of our confidence bands. As an application, we apply our confidence bands to analyze the shift in life expectancy across 12 countries from the year 2001 to 2021.

There are a couple of directions for future research. Firstly, our delta method and bootstrap procedure rely on first-order approximations. Exploring higher-order approximations would be an improvement worth considering. Secondly, our choice of kernel bandwidth directly follows from the theory of kernel density estimation, which, in practice, may not be sample-efficient in achieving consistency. This raises another research problem of finding a bandwidth search procedure that can achieve consistency more efficiently than the one presented in this paper.

\appendix

\section{Pointwise Confidence Intervals via the Empirical Distributions} \label{sec:pointwise}

In certain situations, one might wish to construct a confidence \emph{interval} for the value of the optimal transport at a specific point. Our approach to constructing such an interval follows closely to that of confidence bands, as the interval can be seen as a specific instance of confidence bands that covers only a single point. One key distinction is that, with only a single point, there is no need to estimate the standard deviation, and consequently removing the necessity for kernel density estimation.

\subsection{Methodology}

We first develop an empirical estimator for $T_0$.
We denote $\tilde{P}_n \coloneqq n^{-1} \sum_{i=1}^n \delta_{X_i}$ and $\tilde{Q}_m \coloneqq m^{-1} \sum_{j=1}^m \delta_{Y_i}$ as empirical measures with the observations.
With the empirical distribution functions $\tilde{F}_P(x) = n^{-1}\sum_{i=1}^n \mone\{X_i\leq x\}$ and $\tilde{F}_Q(y) = m^{-1}\sum_{j=1}^m \mone\{Y_j\leq y\}$, we define a plug-in estimator as the OT map from $\tilde{F}_P$ to $\tilde{F}_Q$:
\begin{align}
    \tilde{T}_{n,m}(x) = \tilde{F}_Q^{-1} \circ \tilde{F}_P (x).
\end{align}

\subsection{Bahadur representation}\label{sec:bahadur}

\begin{proposition}[Bahadur Representation of 1D Transport Map]\label{prop:error}
    Suppose that there exists an interval $[a,b]$ such that: (i) $F_Q$ is continuously differentiable on the interval $[F_Q^{-1}(F_P(a)),F_Q^{-1}(F_P(b))]$, (ii) $f_Q = F_Q'$ is nonzero on $[F_Q^{-1}(F_P(a)),F_Q^{-1}(F_P(b))]$. 
    Then, for any $x\in [a,b]$, we have
    \begin{align}
            &\sqrt{n+m}\left(\tilde{T}_{n,m}(x) - T_0(x)\right) \\
            &= \frac{\sqrt{n+m}}{f_Q(T_0(x))}\left[\frac1{n}\sum_{i=1}^n\mone\big\{X_i \leq x\big\} - \frac1{m}\sum_{j=1}^m\mone\big\{Y_j \leq T_0(x)\big\}\right] +o_{P\otimes Q}(1) \\
            &= \frac{\sqrt{n+m}}{n} \sum_{i=1}^n \psi(X_i,x) - \frac{\sqrt{n+m}}{m}\sum_{j=1}^m  \zeta(Y_j,x)+o_{P\otimes Q}(1),\label{eq:bahadur}
        \end{align}
    where
    \begin{align}
        \psi(X_i,x) \coloneqq \frac{1}{f_Q(T_0(x))} \left[ \mone\big\{X_i \leq x\big\} - F_P(x) \right],
    \end{align}
    and
    \begin{align}
        \zeta(Y_j,x)\coloneqq \frac{1}{f_Q(T_0(x))}\left[\mone\big\{Y_j \leq T_0(x)\big\} - F_P(x)\right].
    \end{align}
    Moreover, the $o_{P\otimes Q}(1)$ term does not depend on the choice of $x$.
\end{proposition}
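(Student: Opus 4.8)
The plan is to establish the Bahadur representation for $\tilde{T}_{n,m}(x) - T_0(x)$ by decomposing the difference into two pieces that can each be handled by a classical quantile/distribution-function linearization. Writing $p = F_P(x)$, note that $T_0(x) = F_Q^{-1}(p)$ and $\tilde{T}_{n,m}(x) = \tilde{F}_Q^{-1}(\tilde{F}_P(x))$. I would first insert an intermediate term: $\tilde{F}_Q^{-1}(\tilde{F}_P(x)) - F_Q^{-1}(p) = \bigl[\tilde{F}_Q^{-1}(\tilde{F}_P(x)) - \tilde{F}_Q^{-1}(p)\bigr] + \bigl[\tilde{F}_Q^{-1}(p) - F_Q^{-1}(p)\bigr]$. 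The second bracket is exactly the empirical quantile deviation, which by the classical Bahadur representation for quantiles equals $\frac{1}{f_Q(F_Q^{-1}(p))}\bigl[p - \tilde{F}_Q(F_Q^{-1}(p))\bigr] + o_{P}( (n+m)^{-1/2})$ uniformly in $p$ ranging over the compact set $[F_P(a),F_P(b)]$, using assumptions (i)–(ii) that $f_Q$ is continuous and bounded away from zero on the relevant interval. For the first bracket, since $\tilde{F}_Q^{-1}$ is Lipschitz-like on the relevant range with slope $\approx 1/f_Q$, and $\tilde{F}_P(x) - p = O_P((n+m)^{-1/2})$ uniformly in $x \in [a,b]$ by Donsker, a one-term Taylor-type expansion of $\tilde{F}_Q^{-1}$ around $p$ gives $\frac{1}{f_Q(T_0(x))}\bigl[\tilde{F}_P(x) - p\bigr] + o_P((n+m)^{-1/2})$.

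Combining, the leading term is $\frac{1}{f_Q(T_0(x))}\bigl[(\tilde{F}_P(x) - F_P(x)) - (\tilde{F}_Q(T_0(x)) - F_Q(T_0(x)))\bigr]$, and since $F_Q(T_0(x)) = F_Q(F_Q^{-1}(F_P(x))) = F_P(x)$ by continuity of $F_Q$, this rearranges to $\frac{1}{f_Q(T_0(x))}\bigl[\tilde{F}_P(x) - \tilde{F}_Q(T_0(x))\bigr]$ minus a term that cancels, yielding precisely the claimed expression $\frac{1}{n}\sum_i \psi(X_i,x) - \frac{1}{m}\sum_j \zeta(Y_j,x)$ after expanding $\tilde{F}_P$ and $\tilde{F}_Q$ as averages of indicators and absorbing the $F_P(x)$ centering (which sums to zero across the two pieces, so it can be inserted freely). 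Multiplying through by $\sqrt{n+m}$ gives \eqref{eq:bahadur}.

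The main obstacle — and the reason the two brackets must be handled together rather than separately — is controlling the remainder \emph{uniformly over $x \in [a,b]$} and showing the $o_{P\otimes Q}(1)$ term is genuinely $x$-free. The cleanest route is to avoid the classical pointwise Bahadur expansion and instead use the functional delta method directly: the map $(u,v)\mapsto v^{-1}\circ u$ is Hadamard differentiable at $(F_P,F_Q)$ tangentially to the space of continuous functions (this is essentially Lemma~\ref{lemma:hadamarddiff}, proved later), the pair $\sqrt{n+m}(\tilde{F}_P - F_P, \tilde{F}_Q - F_Q)$ converges weakly in $D[a,b]\times D[F_Q^{-1}(F_P(a)),F_Q^{-1}(F_P(b))]$ to a pair of scaled Brownian bridges composed with $F_P$, $F_Q$, and then the delta method yields $\sqrt{n+m}(\tilde{T}_{n,m} - T_0) = \phi'(\sqrt{n+m}(\tilde{F}_P - F_P), \sqrt{n+m}(\tilde{F}_Q - F_Q)) + o_P(1)$ in $\ell^\infty([a,b])$, where the $o_P(1)$ is in sup-norm and hence automatically uniform in $x$. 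Plugging in the explicit form of $\phi'$ from \eqref{eq:derivative} and writing the empirical processes as sums of indicators gives \eqref{eq:bahadur} with the uniform remainder. I would therefore organize the proof as: (1) verify the weak convergence of the joint empirical process on the appropriate product of cadlag spaces; (2) invoke Hadamard differentiability (forward-referencing or re-deriving the $\phi'$ computation via the quantile-function chain rule, e.g. van der Vaart--Wellner Lemma 3.9.23); (3) apply the functional delta method; (4) expand $\phi'$ into the stated $\psi$, $\zeta$ form, using $F_Q(T_0(x)) = F_P(x)$ to simplify the centering. The one subtlety to flag is the tangency condition: since $F_P$ is continuous, the limit of $\sqrt{n+m}(\tilde F_P - F_P)$ lies a.s. in $C[a,b]$, which is exactly what the Hadamard differentiability of the inverse map requires, so the argument goes through without extra smoothness on $F_P$.
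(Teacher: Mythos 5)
Your final plan---verify joint weak convergence of $\sqrt{n+m}(\tilde F_P-F_P,\tilde F_Q-F_Q)$, invoke Hadamard differentiability of $(u,v)\mapsto v^{-1}\circ u$, apply the functional delta method to get a sup-norm $o_P(1)$ remainder, and then expand $\phi'$ using $F_Q(T_0(x))=F_P(x)$---is exactly the paper's argument (Lemmas~\ref{lemma:hadamarddiff} and~\ref{lemma:functional_delta}). Your preliminary two-bracket quantile decomposition is a correct heuristic for the leading term, and you rightly discard it in favor of the delta method precisely because the latter hands you the $x$-uniform remainder for free.
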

The conditions on $F_Q$ imply that $F_Q$ is continuous on $[F_Q^{-1}(F_P(a)),F_Q^{-1}(F_P(b))]$, which implies $F_Q\circ T_0 = F_Q\circ F_Q^{-1}\circ F_P = F_P$ on $[a,b]$.

We derive an asymptotic representation of the estimation error $\tilde{T}_{n,m}(x) - T_0(x)$. 

\begin{proof}[Proof of Lemma \ref{lemma:hadamarddiff}]
    For $t \geq 0$, let $u_{t}\to u$ and $v_t\to v$ be functions in $D[a,b]$ such that $F_{P, t} = F_P+tu_t$ and $F_{Q, t} = F_Q + tv_t$ are in $D[a,b]$ for each $t$. To compute the Hadamard derivative, we consider the difference quotient:    
    \begin{equation}\label{eq:diffquot}
        \frac{F^{-1}_{Q, t} \circ F_{P,t} - F^{-1}_Q\circ F_P }{t} = \frac{F^{-1}_{Q,t} \circ F_t - G^{-1} \circ F_t }{t} + \frac{F_Q^{-1}\circ F_{P,t} - F_Q^{-1} \circ F_P }{t}
    \end{equation}
    We now apply the Taylor approximation for $F_{Q}^{-1}$. Since $F_Q'$ is continuous and nonzero on $[F_Q^{-1}(F_P(a)),F_Q^{-1}(F_P(b))]$, it follows from the inverse function theorem that $F_Q^{-1}$ is differentiable on $[F_Q(F_Q^{-1}(F_P(a))), F_Q(F_Q^{-1}(F_P(b)))] = [F_P(a),F_P(b)]$ and we have
    \begin{align}
        F_Q^{-1} ( F_{P,t}(x)) &= F_Q^{-1}(F_P(x) + tu_t(x)) \\
        &= F_Q^{-1}(F(x)) + \frac{1}{f_Q( F_Q^{-1}( F_P(x)))}tu_t(x) + o(tu_t(x)).
    \end{align}
    As $\lVert u_t \rVert_{\infty} \leq 1$, we have $o(tu_t(x)) = o(t)$ independent of $x$. From this, we approximate the second term on the right in~\eqref{eq:diffquot}.
    \begin{equation}\label{eq:diff2}
        \frac{F_Q^{-1}\circ F_{P,t} - F_Q^{-1}\circ F_P }{t} = \frac{1}{f_Q\circ F_Q^{-1}\circ F_P}u_t + o(1),
    \end{equation}
    as $t \to 0$.
    From~\cite[Lemma 21.3]{Vaart1998}, the Hadamard derivative of the quantile function $Q\mapsto F_Q^{-1}$ at $Q$ is $v \mapsto-\frac{v\circ F_Q^{-1}}{f_Q\circ F_Q^{-1}}$. From this, we use Taylor's formula again to obtain:
    \begin{align}
        F^{-1}_{Q,t} \circ F_{P,t} = F_Q^{-1} \circ F_{P,t} - \frac{v \circ F_Q^{-1}\circ F_{P,t}}{f_Q\circ F_Q^{-1}\circ F_{P,t}}t + o(t).
    \end{align}
    Therefore, the first term on the right of~\eqref{eq:diffquot} is
    \begin{equation}\label{eq:diff1}
        \frac{F^{-1}_{Q,t} \circ F_{P,t} - F_Q^{-1} \circ F_{P,t} }{t} = F_Q^{-1} \circ F_{P,t} - \frac{v \circ F_{Q}^{-1}\circ F_{P,t}}{f_Q\circ F_Q^{-1}\circ F_{P,t}} + o(1).
      \end{equation}
    Combining~\eqref{eq:diff2} and~\eqref{eq:diff1} and the continuity of $v \circ F_Q^{-1}$ and $g\circ F_Q^{-1}$, we conclude the convergence in the Hadamard sense, that is, it holds that
    \begin{align}
        \lim_{t\to 0}\frac{F^{-1}_{Q,t} \circ F_{P,t}  - F_Q^{-1} \circ F_P }{t} &=  \frac{1}{f_Q\circ F_Q^{-1}\circ F_P }\left[u - v \circ F_Q^{-1} \circ F_P\right] \\
        &= \frac{1}{f_Q\circ T}\left[u - v \circ T\right].
    \end{align}
\end{proof}
\begin{proof}[Proof of Proposition~\ref{prop:error}]
    We first recall that $F_Q$ has a nonzero derivative at $F_Q^{-1}(F_P(x))$ for any $x\in [a,b]$; this implies that $F_Q$ is locally invertible at $F_Q^{-1}(F_P(x))$, and so $F_Q(T_0(x)) = F_Q(F_Q^{-1}(F_P(x))) = F_P(x)$. This allows us to simplify the Hadamard derivative~\eqref{eq:derivative} evaluated at $u_i(x)=\mone\{X_i \leq x\}-F_P(x)$ and $v_j(y) = \mone\{Y_j \leq y\}-F_Q(y)$ as follows:
    \begin{align}
        \quad \phi'(u_i,v_j)(x) &= \frac1{f_Q(T_0(x))}\left[\mone\big\{X_i \leq x\big\}-F_P(x) - \mone\big\{Y_j \leq F_Q^{-1}( F_P(x))\big\} + F_Q(T_0(x))\right] \\
        &= \frac{1}{g(T_0(x))}\left[\mone\big\{X_i \leq x\big\}-F_P(x) - \mone\big\{Y_j \leq T_0(x)\big\} + F_P(x)\right] \\
        &= \psi(X_i,x) - \zeta(Y_j,x). 
    \end{align}
    Using the functional delta method (Lemma~\ref{lemma:functional_delta}) and the linearity of $\phi'$, we arrive at the final approximation.
    \begin{align}
        \sqrt{n+m}\left(\tilde{T}_{n,m}(x) - T_0(x)\right) &= \sqrt{n+m}\left(\phi(\tilde{F}_{P,n},\tilde{F}_{Q,m}) - \phi(F,G)\right) \\
        &= \sqrt{n+m}\phi'(\tilde{F}_{P,n}-F_P,\tilde{F}_{Q,m}-F_Q) + o_{P\otimes Q}(1) \\
        &= \frac{\sqrt{n+m}}{n}\sum_{i=1}^n\psi(X_i,x) + \frac{\sqrt{n+m}}{m}\sum_{j=1}^m\zeta(Y_j,x) + o_{P\otimes Q}(1).
    \end{align}
\end{proof}

\subsection{Gaussian Approximation Theorem}

With the Bahadur representation~\eqref{eq:bahadur}, we can easily develop a Gaussian approximation on the estimation error. If $F_Q$ an interval $[a,b]$ satisfy the conditions in Proposition~\ref{prop:error}, we obtain the following representation of the error on $[a,b]$:
\begin{align}
    \sqrt{n+m}\left(\tilde{T}_{n,m}(x) - T_0(x)\right) = \frac{\sqrt{n+m}}{n}\sum_{i=1}^n\psi(X_i,x) - \frac{\sqrt{n+m}}{m}\sum_{j=1}^m\zeta(Y_j,x) + o_{P\otimes Q}(1)\label{eq:decomp}
\end{align}
Note that the two processes are independent, since $\{X_i\}_{t = 1}^n$ and $\{Y_j\}_{j=1}^m$ are mutually independent.

We consider Gaussian limits of the terms in \eqref{eq:decomp}. In view of Proposition~\ref{prop:error}, Donsker's theorem tells us that there exist two independent Brownian bridges $\mathbb{G}_1$ and $\mathbb{G}_2$ such that the following convergences of $D[a,b]$ functions hold under the uniform norm:
\begin{align}
\frac1{\sqrt{n}}\sum_{i=1}^n\psi(X_i,x)  &= \frac1{f_Q(T_0(x))}\frac1{\sqrt{n}}\sum_{i=1}^n [\mone \{X_i \leq x\} - F_P(x)] \xrightarrow{d} \frac1{f_Q(T_0(x))}\mathbb{G}_1(F_P(x)), \label{eq:G1F}
\intertext{and}
\frac1{\sqrt{m}}\sum_{j=1}^m\xi(Y_j,x)  &= \frac1{f_Q(T_0(x))}\frac1{\sqrt{m}}\sum_{j=1}^m [\mone \{Y_j \leq T_0(x)\} - F_P(x)] \xrightarrow{d} \frac1{f_Q(T_0(x))}\mathbb{G}_2(F_P(x)), \label{eq:G2F}
\end{align}
on $[a,b]$. Recalling the sample size condition $n/(n+m)\to \kappa$, we have the following convergence to a Gaussian process:
\begin{align}
    \sqrt{n + m} (\tilde{T}_{n,m}(x) - T_0(x)) & =  \frac{\sqrt{n + m}}{n} \sum_{i=1}^n \psi(X_i; x) - \frac{\sqrt{n + m}}{m} \sum_{j=1}^m \zeta(Y_j; \cdot) + o_{P\otimes Q}(1) \\
    & \overset{d}{\to} \frac1{f_Q(T_0(x))}\left[\sqrt{1/\kappa} \mathbb{G}_1 (x) - \sqrt{1/(1-\kappa)}  \mathbb{G}_2 (F_P(x))\right].
\end{align}
We remark that the covariance function of $\mathbb{G}_1\circ F_P$ and $\mathbb{G}_2\circ F_P$ are can be written explicitly:
\begin{equation*}
    \Cov\left(\mathbb{G}_1(x), \mathbb{G}_1(x') \right) = \Cov\left(\mathbb{G}_1(x), \mathbb{G}_2(x') \right) =  \min\{F_P(x), F_P(x')\} - F_P(x) F_P(x').
\end{equation*}

\subsection{Bootstrap for Pointwise Confidence Intervals}\label{sec:pointwiseinterval}

Recall the empirical distribution functions $\tilde{F}_{P,n}(x) = n^{-1}\sum_{i=1}^n \mone\{X_i\leq x\}$ and $\tilde{F}_{Q,m}(y) = m^{-1}\sum_{j=1}^m \mone\{Y_j\leq y\}$. We define a plug-in estimator as the optimal transport map from $\tilde{P}_n$ to $\tilde{Q}_m$:

Let $X^*_1,\ldots,X^*_n \sim \tilde{P}_n$ and $Y^*_1,\ldots,Y^*_m \sim \tilde{Q}_m$. Define the bootstrap distribution function $\tilde{F}^*_{P,n}(x) \coloneqq n^{-1}\sum_{i=1}^n \mone\{X^*_i \leq x\}$ and $\tilde{F}^*_{Q,m}(y) \coloneqq m^{-1}\sum_{j=1}^m \mone\{Y^*_j \leq y\}$. The bootstrap transport map is then given by $\tilde{T}_{n,m}^*(x) \coloneqq \tilde{F}^{* -1}_{Q,m}(\tilde{F}_{P,n}^*(x))$.
\begin{theorem} \label{thm:confidence_pointwise}
Let $\Phi$ be the distribution function of the standard normal distribution. If $F_Q$ is continuously differentiable at $F_Q^{-1}(F_P(x))$ and the derivative $f_Q(F_Q^{-1}(F_P(x)))$ is nonzero, then
\begin{equation}\label{eq:condconsist}
\sup_{x_{0}}\left\lvert \tilde{P}^*_{n,m}\left(\sqrt{n+m}(\tilde{T}_{n,m}^{*}(x)-\tilde{T}_{n,m}(x)) \le x_0\right) -\Phi\left(x_0/\sigma_T(x)\right) \right\rvert \xrightarrow{P\otimes Q} 0, 
\end{equation}
where $\sigma_T(x) = (f_Q(T_0(x))^{-1}\sqrt{(\kappa^{-1}+(1-\kappa)^{-1})F_P(x)(1-F_P(x))}$.
\end{theorem}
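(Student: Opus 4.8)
\textbf{Proof proposal for Theorem~\ref{thm:confidence_pointwise}.}

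The plan is to follow the same functional-delta-method pipeline used for the kernel-smoothed bootstrap, but with the empirical distributions playing the role of the smoothed ones, and with the extra simplification that no standardization by $\hat{s}_{n,m}$ is needed. First I would invoke the conditional multiplier/empirical bootstrap central limit theorem (e.g., the standard Donsker bootstrap result, Theorem 23.7 of \cite{Vaart1998}, or Lemma~\ref{lemma:deltaboot} in the appendix): conditionally on the data, $\sqrt{n}(\tilde{F}^*_{P,n} - \tilde{F}_{P,n})$ converges (in the bounded-Lipschitz sense, $P$-almost surely along the relevant subsequence) to $\mathbb{G}_1 \circ F_P$, and likewise $\sqrt{m}(\tilde{F}^*_{Q,m} - \tilde{F}_{Q,m}) \rightsquigarrow \mathbb{G}_2 \circ F_Q$, with $\mathbb{G}_1, \mathbb{G}_2$ independent Brownian bridges. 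Under $n/(n+m)\to\kappa$, the pair $\sqrt{n+m}(\tilde{F}^*_{P,n}-\tilde{F}_{P,n}, \tilde{F}^*_{Q,m}-\tilde{F}_{Q,m})$ then converges conditionally to $(\sqrt{1/\kappa}\,\mathbb{G}_1\circ F_P,\ \sqrt{1/(1-\kappa)}\,\mathbb{G}_2\circ F_Q)$ in $D[a,b]^2$.

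Next I would apply the delta method for the bootstrap (Lemma~\ref{lemma:deltaboot}) to the functional $\phi(u,v)=v^{-1}\circ u$, which is Hadamard differentiable at $(F_P,F_Q)$ by Lemma~\ref{lemma:hadamarddiff} with derivative $\phi'(u,v) = (f_Q\circ T_0)^{-1}(u - v\circ T_0)$; here the hypothesis that $f_Q$ is continuously differentiable and nonzero at $F_Q^{-1}(F_P(x))$ supplies exactly the local invertibility needed to evaluate $\phi'$ pointwise at the single point $x$, and gives $F_Q(T_0(x))=F_P(x)$. This yields, conditionally on the data,
\begin{equation*}
    \sqrt{n+m}\bigl(\tilde{T}^*_{n,m}(x) - \tilde{T}_{n,m}(x)\bigr) \xrightarrow{d} \frac{1}{f_Q(T_0(x))}\Bigl[\sqrt{1/\kappa}\,\mathbb{G}_1(F_P(x)) - \sqrt{1/(1-\kappa)}\,\mathbb{G}_2(F_P(x))\Bigr],
\end{equation*}
and the right-hand side is a centered Gaussian random variable; computing its variance from $\Var(\mathbb{G}_i(F_P(x))) = F_P(x)(1-F_P(x))$ and the independence of $\mathbb{G}_1,\mathbb{G}_2$ gives variance $\sigma_T(x)^2$ exactly. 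Hence the conditional law of $\sqrt{n+m}(\tilde{T}^*_{n,m}(x)-\tilde{T}_{n,m}(x))$ converges weakly, in $P\otimes Q$-probability, to $N(0,\sigma_T(x)^2)$. Since the limiting distribution function $x_0 \mapsto \Phi(x_0/\sigma_T(x))$ is continuous (indeed $\sigma_T(x)>0$ because $f_Q(T_0(x))$ is finite and $F_P(x)\in(0,1)$ for $x$ in the interior of the support), weak convergence of distributions upgrades to uniform convergence of the distribution functions by Pólya's theorem, which gives \eqref{eq:condconsist}.

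The main obstacle is making the delta method for the bootstrap genuinely applicable at a \emph{single} point $x$ rather than uniformly on $[a,b]$: the Hadamard differentiability in Lemma~\ref{lemma:hadamarddiff} is stated for the map into $D[a,b]$, so I would either (i) restrict attention to a small closed subinterval around $x$ on which the hypotheses of Proposition~\ref{prop:error} hold and then read off the coordinate at $x$ via the continuous evaluation functional, or (ii) give a direct one-dimensional delta-method argument using the Bahadur representation of Proposition~\ref{prop:error} together with the ordinary two-sample bootstrap CLT for $\tilde{F}_{P,n}(x)$ and $\tilde{F}_{Q,m}(T_0(x))$ as real-valued statistics. Route (ii) is cleaner: the Bahadur remainder is $o_{P\otimes Q}(1)$ uniformly, and the same linearization applies verbatim to the bootstrap pair, so the conditional law of $\sqrt{n+m}(\tilde{T}^*_{n,m}(x)-\tilde{T}_{n,m}(x))$ is asymptotically that of the linear statistic $\tfrac{\sqrt{n+m}}{f_Q(T_0(x))}[(\tilde{F}^*_{P,n}-\tilde{F}_{P,n})(x) - (\tilde{F}^*_{Q,m}-\tilde{F}_{Q,m})(T_0(x))]$, whose conditional CLT is classical. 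A secondary point requiring care is that the Bahadur-type linearization must hold for the bootstrap estimator $\tilde{T}^*_{n,m}$ with a remainder that is $o(1)$ in $\tilde{P}^*_{n,m}$-probability, $P\otimes Q$-almost surely; this follows by applying Proposition~\ref{prop:error}'s argument conditionally, using that $\tilde{F}_{P,n}\to F_P$ and $\tilde{F}_{Q,m}\to F_Q$ uniformly so that the relevant derivative and invertibility conditions are eventually met along almost every sample path.
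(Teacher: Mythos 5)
Your approach is correct and reaches the right conclusion, but it takes a somewhat different route from the paper's own proof. The paper works at the level of the single real number $x$: it proves the conditional bootstrap CLT for $\sqrt{n}\bigl(\tilde{F}^*_{P,n}(x)-\tilde{F}_{P,n}(x)\bigr)$ directly via the Berry--Esseen inequality (uniformly in the threshold $x_0$, not in $x$), controls the difference between $\sigma_{\tilde P_n}(x)$ and $\sigma_P(x)$ with a mean-value argument, records the marginal CLTs $\sqrt{n}(\tilde F_{P,n}(x)-F_P(x))\xrightarrow{d}Z_{F_P}(x)$ and $\sqrt{m}(\tilde F_{Q,m}(x)-F_Q(x))\xrightarrow{d}Z_{F_Q}(x)$, and then invokes Lemma~\ref{lemma:deltaboot} with the Hadamard derivative from Lemma~\ref{lemma:hadamarddiff}. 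You instead invoke the process-level empirical bootstrap Donsker theorem (conditional weak convergence of $\sqrt{n}(\tilde F^*_{P,n}-\tilde F_{P,n})$ to a Brownian bridge in $D[a,b]$), apply the functional delta method to get conditional weak convergence of the whole process $\sqrt{n+m}(\tilde T^*_{n,m}-\tilde T_{n,m})$, evaluate at the coordinate $x$, and upgrade pointwise weak convergence to uniformity in $x_0$ by P\'olya's theorem. Your process-level route is, if anything, the more careful of the two: Lemma~\ref{lemma:deltaboot} is stated for $\mathcal D$-valued processes, and the paper applies it after establishing only pointwise scalar convergences, whereas your version supplies exactly the process-level ingredient that lemma requires, at the cost of importing the Donsker bootstrap theorem in place of the elementary Berry--Esseen bound. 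You also correctly compute $\sigma_T(x)^2$ from the independence of $\mathbb G_1$ and $\mathbb G_2$; the paper's final display collapses the two independent Gaussians $Z_{F_P}$ and $Z_{F_Q\circ T_0}$ into a single $Z_{F_P}$, which read literally gives the wrong variance $(1/\sqrt\kappa-1/\sqrt{1-\kappa})^2F_P(1-F_P)/f_Q(T_0)^2$ rather than $(1/\kappa+1/(1-\kappa))F_P(1-F_P)/f_Q(T_0)^2$, so your version fixes a slip in the paper. Your ``route (ii)'', a direct scalar delta-method argument through the Bahadur representation, is also viable and in fact closer in spirit to what the paper's Berry--Esseen computation is implicitly doing.
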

\begin{proof}[Proof of Theorem \ref{thm:confidence_pointwise}]
    With $\sigma_P(x) \coloneqq \sqrt{F_P(x)(1-F_P(x))}$ and $\sigma_Q(x) \coloneqq \sqrt{F_Q(x)(1-F_Q(x))}$, we will show the weak conditional convergences of the bootstrap empirical distributions:
    \begin{align}
        &\sup_{x_0} \left\lvert \tilde{P}_n\left(\sqrt{n}[\tilde{F}^*_{P,n}(x) - \tilde{F}_{P,n}(x)] \leq x_0) - \Phi(x_0/\sigma_{P}(x)\right) \right\rvert \xrightarrow{P} 0, \label{eq:convU}
    \intertext{and}
        &\sup_{x_0} \left\lvert \tilde{Q}_m\left(\sqrt{m}[\tilde{F}^*_{Q,m}(x) - \tilde{F}_{Q,m}(x)] \leq x_0) - \Phi(x_0/\sigma_{Q}(x)\right) \right\rvert \xrightarrow{Q} 0. \label{eq:convV}
    \end{align}
    Observe that the variance of $\tilde{F}^*_{P,n}(x) - \tilde{F}_{P,n}(x)$ is
    \[ \sigma^2_{\tilde{P}_n}  \coloneqq \tilde{F}_{P,n}(x)(1-\tilde{F}_{P,n}(x)) = \sigma^2_P + o_P(1).\]
    Applying the Berry-Esseen on $\sqrt{n}[\tilde{F}^*_{P,n}(x) - \tilde{F}_{P,n}(x)]$ with respect to the empirical distribution $\tilde{P}_n$, there is a constant $A>0$ such that
    \begin{align}
        &\sup_{x_0} \left\lvert \tilde{P}_n\left(\sqrt{n}[\tilde{F}^*_{P,n}(x) - \tilde{F}_{P,n}(x)] \leq x_0) - \Phi(x_0/\sigma^2_{\tilde{P}_n}(x)\right) \right\rvert \\
        &\leq \frac{A\sum_{i=1}^n \lvert \tilde{F}^*_{P,n}(x) - \tilde{F}_{P,n}(x) \rvert^3}{n^{3/2}\sigma^2_{\tilde{P}_n}} \\
        &\leq \frac{A}{n^{1/2}\sigma_{\tilde{P}_{n}}} \\
        &= \frac{A}{n^{1/2}\sigma_P} + o_P(1).
    \end{align}
    We now find an upper bound of $ \lvert  \Phi(x_0/\sigma_{\tilde{P}_n}(x)) -  \Phi(x_0/\sigma_P(x)) \rvert$ via the method of calculus on the function $\sigma \mapsto \Phi(x_0/\sigma)$. By the boundedness of the Gaussian density, there is a constant $B$ such that
    \begin{equation*}
         \sup_{x_0} \left\lvert \frac{\partial \Phi (x_0/\sigma)}{\partial \sigma} \right\rvert  \leq \frac{B}{\sigma}.
    \end{equation*}
    It follows from the mean value theorem that
    \begin{equation*}
        \lvert  \Phi(x_0/\sigma_{\tilde{P}_n}(x)) -  \Phi(x_0/\sigma_P(x)) \rvert \leq \frac{B}{\min\{\sigma_{\tilde{P}_n}(x),\sigma_P(x)\}}\lvert \sigma_{\tilde{P}_n}(x)-\sigma_P(x)\rvert\xrightarrow{P} 0, 
    \end{equation*}
    which allows us to conclude~\eqref{eq:convU}. The convergence~\eqref{eq:convV} follows analogously.

    For any cadlag function $\Lambda$, define a random function $Z_{\Lambda}(x) \sim \mathcal{N}(0,\Lambda(x)(1-\Lambda(x)))$. By the weak law of large number,
    \begin{align}
        \sqrt{n}[\tilde{F}_{P,n}(x) - F_P(x)]\xrightarrow{d} Z_{F_P}(x), \label{eq:ZF}
        \intertext{and}
        \sqrt{m}[\tilde{F}_{Q,m}(x) - F_{Q}(x)]\xrightarrow{d} Z_{F_Q}(x). \label{eq:ZG}
    \end{align}

    Consider the functional $\phi(u,v) = v^{-1}\circ u$. From Lemma~\ref{lemma:hadamarddiff}, the Hadamard derivative of $\phi$ at $(F_P,F_Q)$ is $\phi'(u,v) = (u - v\circ T)/f_Q\circ T$. We now apply the delta method for bootstrap Lemma~\ref{lemma:deltaboot} on $\phi$: from~\eqref{eq:convU},~\eqref{eq:convV},~\eqref{eq:ZF},~\eqref{eq:ZG} and $n/(n+m) \to \kappa$, we have that $\sqrt{n+m}(\phi(\tilde{F}^*_{P,n},\tilde{F}^*_{Q,m}) - \phi(\tilde{F}_{P,n},\tilde{F}_{Q,m})) = \sqrt{n+m}( \tilde{T}_{n,m}^*-\tilde{T}_{n,m})$ converges in distribution to
    \begin{align}
        \phi'(\sqrt{1/\kappa}Z_{F_P}, \sqrt{1/(1-\kappa)}Z_{F_Q}) &=\frac1{f_Q\circ T_0}\left(\sqrt{1/\kappa}Z_{F_P} - \sqrt{1/(1-\kappa)}Z_{F_Q}\circ T_0\right) \\
        &=\frac1{f_Q\circ T_0}\left(\sqrt{1/\kappa}Z_{F_P} - \sqrt{1/(1-\kappa)}Z_{F_Q\circ T_0}\right) \\
        &= \frac1{f_Q\circ T_0}\left(\sqrt{1/\kappa}- \sqrt{1/(1-\kappa)}\right)Z_{F_P},
    \end{align}
    conditional to $X_1,\ldots,X_n$ in probability. In other words, the convergence~\eqref{eq:condconsist} holds true. 
\end{proof}
\begin{corollary}[Asymptotic Validity of the Bootstrap Confidence Interval] \label{cor:valid_point_conv}
    For any $\alpha \in (0,1)$, let $\alpha' \coloneqq \alpha/2$ and define 
    \[\tilde{q}_{n,m}(1-\alpha') \coloneqq \text{\emph{the $(1-\alpha')$-quantile of }} \sqrt{n+m}\Big(\tilde{T}_{n,m}^*(x) - \tilde{T}_{n,m}(x)\Big) \text{\emph{ under $\tilde{P}_{n,m}$}}.\] 
    Then, for any $x\in\R$ such that $F_Q$ is continuously differentiable at $F_Q^{-1}(F_P(x))$ and the derivative $f_Q(F_{Q}^{-1}(F_P(x)))$ is nonzero, the bootstrap confidence band for $T_0(x)$:
    \begin{equation}
        \mathcal{C}^{(\alpha)}_{n,m}(x) \coloneqq\left[\tilde{T}_{n,m}(x) \pm \frac{\tilde{q}_{n,m}(1-\alpha')}{\sqrt{n+m}}\right],
    \end{equation}
    is asymptotically consistent at level $1-\alpha$, that is, \[\mathbb{P}\left(T_0(x) \in \mathcal{C}^{(\alpha)}_{n,m}(x)\right) \to 1-\alpha,  \]
    as $n,m\to\infty$ and $n/(n+m)\to \kappa$.
\end{corollary}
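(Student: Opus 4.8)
The plan is to obtain the coverage probability by combining the unconditional Gaussian limit of the estimation error $W_{n,m}\coloneqq\sqrt{n+m}(\tilde{T}_{n,m}(x)-T_0(x))$ with the conditional bootstrap consistency of Theorem~\ref{thm:confidence_pointwise}, linked through a quantile-consistency argument and Slutsky's theorem. First I would record the unconditional limit established above from the Bahadur representation~\eqref{eq:bahadur}, Donsker's theorem, and the sample-size condition $n/(n+m)\to\kappa$: one has $W_{n,m}\xrightarrow{d} f_Q(T_0(x))^{-1}\big[\sqrt{1/\kappa}\,\mathbb{G}_1(F_P(x))-\sqrt{1/(1-\kappa)}\,\mathbb{G}_2(F_P(x))\big]$, and since $\mathbb{G}_1,\mathbb{G}_2$ are independent the covariance computation recorded there shows the right-hand side is $\mathcal{N}(0,\sigma_T(x)^2)$. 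Throughout I assume the non-degeneracy $\sigma_T(x)>0$, i.e. $F_P(x)\in(0,1)$, so that the limiting distribution function of $W_{n,m}$ is $t\mapsto\Phi(t/\sigma_T(x))$, which is continuous and strictly increasing on $\R$.

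Second, I would translate Theorem~\ref{thm:confidence_pointwise} into a statement about the data-dependent quantile $\tilde{q}_{n,m}(1-\alpha')$. Writing $\hat{H}_{n,m}(t)\coloneqq\tilde{P}_{n,m}\big(\sqrt{n+m}(\tilde{T}_{n,m}^*(x)-\tilde{T}_{n,m}(x))\le t\big)$ for the conditional bootstrap distribution function, so that $\tilde{q}_{n,m}(1-\alpha')=\hat{H}_{n,m}^{-1}(1-\alpha')$, Theorem~\ref{thm:confidence_pointwise} gives $\sup_t|\hat{H}_{n,m}(t)-\Phi(t/\sigma_T(x))|\xrightarrow{P\otimes Q}0$. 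Since $t\mapsto\Phi(t/\sigma_T(x))$ is continuous and strictly increasing, uniform convergence of distribution functions to such a limit forces the generalized inverse at the fixed level $1-\alpha'$ to converge; concretely, for any $\varepsilon>0$ one sandwiches $\tilde{q}_{n,m}(1-\alpha')$ between points where $\Phi(\cdot/\sigma_T(x))$ takes values slightly below and above $1-\alpha'$, using the uniform bound, to conclude $\tilde{q}_{n,m}(1-\alpha')\xrightarrow{P\otimes Q}\sigma_T(x)\,z_{1-\alpha'}$ with $z_{1-\alpha'}\coloneqq\Phi^{-1}(1-\alpha')$.

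Third, I would rewrite the coverage event. By the definition of $\mathcal{C}^{(\alpha)}_{n,m}(x)$, the event $\{T_0(x)\in\mathcal{C}^{(\alpha)}_{n,m}(x)\}$ is exactly $\{|W_{n,m}|\le\tilde{q}_{n,m}(1-\alpha')\}$. Applying Slutsky's theorem to $W_{n,m}\xrightarrow{d}\mathcal{N}(0,\sigma_T(x)^2)$ and $\tilde{q}_{n,m}(1-\alpha')\xrightarrow{P\otimes Q}\sigma_T(x)z_{1-\alpha'}$, and using continuity of the Gaussian distribution function at $\pm\sigma_T(x)z_{1-\alpha'}$, gives $\mathbb{P}(T_0(x)\in\mathcal{C}^{(\alpha)}_{n,m}(x))\to\Phi(z_{1-\alpha'})-\Phi(-z_{1-\alpha'})=(1-\alpha')-\alpha'=1-\alpha$, where the last step uses $\alpha'=\alpha/2$.

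The main obstacle I anticipate is the quantile-consistency step: carefully transferring the \emph{conditional} uniform convergence of Theorem~\ref{thm:confidence_pointwise}, which holds only in $P\otimes Q$-probability, into convergence of the \emph{random} quantile $\tilde{q}_{n,m}(1-\alpha')$ in the same mode. This requires verifying that $\hat{H}_{n,m}$ is a genuine (random) distribution function with a measurable generalized inverse, and applying the ``uniform convergence of distribution functions implies convergence of quantiles'' fact in its in-probability version---for instance by passing to subsequences along which the convergence is almost sure, where the deterministic version of the fact applies, and then using the subsequence characterization of convergence in probability. The remaining ingredients, namely the Bahadur/Donsker limit from the preceding subsection and the final Slutsky step, are then routine; a minor additional point is to exclude or treat trivially the degenerate case $\sigma_T(x)=0$.
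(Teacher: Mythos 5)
Your proposal is correct and follows essentially the same route as the paper's proof: invoke Theorem~\ref{thm:confidence_pointwise} for Kolmogorov-uniform bootstrap consistency, pass to almost-sure subsequences to obtain consistency of the bootstrap quantile $\tilde{q}_{n,m}(1-\alpha')$ toward the Gaussian quantile, and then apply Slutsky's theorem with the unconditional Gaussian limit of $\sqrt{n+m}(\tilde{T}_{n,m}(x)-T_0(x))$ to conclude the coverage probability converges to $1-2\alpha'=1-\alpha$. The only cosmetic differences are that the paper treats the two endpoints of the interval separately and invokes the symmetry of $Z_T$, whereas you handle the two-sided event $\{|W_{n,m}|\le\tilde{q}_{n,m}(1-\alpha')\}$ directly, and you spell out the quantile-consistency sandwich argument that the paper subsumes by citing Lemma 21.2 of van der Vaart; your explicit remark about the degenerate case $\sigma_T(x)=0$ (equivalently $F_P(x)\in\{0,1\}$) is a sensible caveat that the paper leaves implicit.
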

\begin{proof}[Proof of Corollary \ref{cor:valid_point_conv}]
    For notational convenience, denote $\Phi_{T,x}(x_0) \coloneqq \Phi(x_0/\sigma_T(x))$. Recall from~\eqref{eq:condconsist} that the sequence $\tilde{P}_{n,m}\Big(\sqrt{n + m}\big(\tilde{T}_{n,m}^*(x)-\tilde{T}_{n,m}(x)\big)\leq x_0\Big)$ converges in $P\otimes Q$-probability to $\Phi\left(x_0/\sigma_T(x)\right)$. By passing along any subsequence and a further subsequence, we can assume that the convergence is $P\otimes Q$-almost surely. 
    
    The almost-sure convergence of the distribution functions implies the almost-sure convergence of the corresponding quantile functions~\cite[Lemma 21.2]{Vaart1998}. In particular, $\tilde{q}_{n,m}(1-\alpha')$, which is the $1-\alpha'$-quantiles of $\sqrt{n + m}\big(\tilde{T}_{n,m}^*(x)-\tilde{T}_{n,m}(x)\big)$, converges $P\otimes Q$-a.s. to $\Phi_{T,x}^{-1}(1-\alpha')$. It follows from Slutsky's theorem that $\sqrt{n + m}\big(\tilde{T}_{n,m}(x)-T_{n,m}(x)\big) - \tilde{q}_{n,m}(1-\alpha')$ converges in distribution to $Z_T(x) - \Phi_{T,x}^{-1}(1-\alpha')$, where $Z_T \sim \mathcal{N}(0,\sigma^2_T)$. Therefore, as $n,m\to\infty$ and $n/(n+m)\to \kappa$,
    \begin{align}
     &\mathbb{P}\left(T_0(x) \geq \tilde{T}_{n,m}(x) - \tilde{q}_{n,m}(1-\alpha')/\sqrt{n+m}\right) \\
     &= \mathbb{P}\left(\sqrt{n + m}\big(\tilde{T}_{n,m}(x)-T_{n,m}(x)\big) \leq \tilde{q}_{n,m}(1-\alpha')\right) \\
     &\to \mathbb{P}\left(Z_T \leq \Phi_{T,x}^{-1}(1-\alpha')\right) \\
     &= \Phi_{T,x}(\Phi_{T,x}^{-1}(1-\alpha')) \\
     &= 1-\alpha'.
    \end{align}
    Similarly, by the symmetry of $Z_T$,
    \begin{align}
     &\mathbb{P}\left(T_0(x) \leq \tilde{T}_{n,m}(x) + \tilde{q}_{n,m}(1-\alpha')/\sqrt{n+m}\right) \\
     &= \mathbb{P}\left(\sqrt{n + m}\big(\tilde{T}_{n,m}(x)-T_{n,m}(x)\big) \geq -\tilde{q}_{n,m}(1-\alpha')\right) \\
     &\to \mathbb{P}\left(Z_T \geq -\Phi_{T,x}^{-1}(1-\alpha')\right)  \\
     &= \mathbb{P}\left(Z_T \leq \Phi_{T,x}^{-1}(1-\alpha')\right) \\
     &= 1-\alpha' .
    \end{align}
    We conclude that $\mathbb{P}\left(T_0(x) \in \mathcal{C}^{(\alpha)}_{n,m}(x)\right) \to 1-2\alpha'=1-\alpha $.
\end{proof}

\subsection{Simulation}

To validate our asymptotic intervals, we perform Monte Carlo simulation with the same design as in Section~\ref{sec:simdesign} for three sample sizes: $n=200,700$ and $2000$. For each $x\in [-2.5,2.5]$, the estimated coverage probability \emph{at point} $x$ is the proportion of 1000 runs in which our confidence interval at $x$ contains $T_0(x)$. Table~\ref{table:pointwise} displays the summary statistics for the pointwise coverage probabilities and the medians of average widths of the intervals across $x\in[-2.5,2.5]$.

\begin{table}[]
\begin{tabular}{crrcccc}
\hline
\multirow{2}{*}{$1-\alpha$} & \multirow{2}{*}{$n$} & \multirow{2}{*}{$m$} & \multirow{2}{*}{Average width} & \multicolumn{3}{c}{Pointwise coverage probabilities} \\ \cline{5-7} 
                       &                    &                    &                                & Minimum    & Maximum   & Average         \\ \hline
\multirow{3}{*}{0.90}  & 200                & 50                 & 3.35                           & 0.008      & 0.865     & 0.73            \\
                       & 700                & 175                & 2.44                           & 0.362      & 0.881     & 0.83            \\
                       & 2000               & 500                & 1.62                           & 0.774      & 0.889     & \textbf{0.86}   \\ \hline
\multirow{3}{*}{0.95}  & 200                & 50                 & 3.93                           & 0.01       & 0.924     & 0.79            \\
                       & 700                & 175                & 2.84                           & 0.341      & 0.942     & 0.88            \\
                       & 2000               & 500                & 1.94                           & 0.81       & 0.945     & \textbf{0.92}   \\ \hline
\multirow{3}{*}{0.99}  & 200                & 50                 & 4.93                           & 0.003      & 0.979     & 0.84            \\
                       & 700                & 175                & 3.63                           & 0.365      & 0.986     & 0.93            \\
                       & 2000               & 500                & 2.55                           & 0.847      & 0.99      & \textbf{0.97}   \\ \hline
\end{tabular}
\caption{Evaluations of our $(1-\alpha)$-level pointwise confidence intervals of the optimal transport map from $N(0,1)$ to $\operatorname{Gamma}(5,0.5)$ based on 1,000 pairs of samples from each distribution. The table displays the median of average widths and summary of the coverage probabilities of the pointwise confidence intervals over $[-2.5,2.5]$.}
\label{table:pointwise}
\end{table}

 For small sample sizes ($n=200$ and $n=700$), the minimum coverage probabilities are much smaller than the nominal probabilities ($1-\alpha$), whereas the maximum coverage probabilities are close to the nominal probabilities. And as the sample size increases, both the minimum and maximum coverage probabilities converge to the nominal probabilities. Of course, the widths of the confidence intervals decreases as $n$ increases and $1-\alpha$ decreases.

\begin{figure}
    \centering
    \includegraphics[width=0.95\hsize]{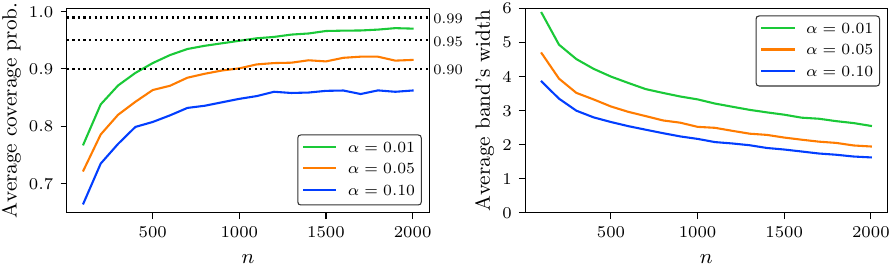}
    \caption{Plots of the average of the coverage probabilities (left) and the median of average widths (right) of the simulated pointwise confidence intervals over $[-2.5,2.5]$ as functions of sample size $n$.}
    \label{fig:pointwise1}
\end{figure}

In Figure~\ref{fig:pointwise1}, we plot the average coverage probability and the median of average width as a function of $n$. From the plots, we see that the average coverage probability converges to the nominal probability as $n$ increases. However, the convergent is very slow for $1-\alpha=0.90$ and $0.95$. Ss discussed in Section~\ref{sec:result}, this is attributed to the insufficient number of sample points from the right tail of $N(0,1)$, which hinders the estimation of the transport map between the two distributions.

\begin{figure}
    \centering
    \includegraphics[width=0.95\hsize]{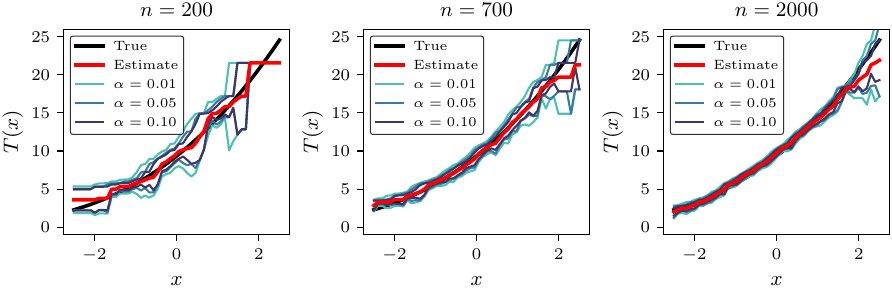}
    \caption{Examples of $(1-\alpha)$-level pointwise confidence intervals over $[-2.5,2.5]$, for three different values of $\alpha$, based on specific samples of size $200, 700$ and $2000$.}
    \label{fig:pointwise2}
\end{figure}

The cause of the sub-optimal coverage probability becomes more apparent when examining the individual confidence intervals (Figure~\ref{fig:pointwise2}). We can see that for larger values of $x$, our plug-in estimator (the red curves) remains significantly distant from the true transport map (the black curves). Consequently, this discrepancy causes the confidence intervals to fail in capturing the transport map accurately.

\section{Mathematical Tools}\label{sec:tools}
\subsection{Functional delta method}
Consider stochastic processes $\mathbb{X}_n$ and $\mathbb{T}$ with values in a normed linear space $\mathcal{D}$ and a function $\phi:\mathcal{D}\to\mathcal{E}$ where $\mathcal{E}$ is another normed linear space. The functional delta method provides a way to turn the weak convergence of a sequence of stochastic processes $a_n(\mathbb{X}_n - \mathbb{T})$ into that of $a_n\left(\phi(\mathbb{X}_n)-\phi(\mathbb{T})\right)$. The idea is to realize that the latter can be written as $a_n\left(\phi(\mathbb{T} + a_n^{-1}h_n)- \phi(\mathbb{T})\right)$ where $h_n=a_n(\mathbb{X}_n - \mathbb{T})$. With sufficient differentiability condition on $\phi$ we expect this sequence to converge to $\phi'(\mathbb{T})$ as $a_n\to \infty$. To rigorously obtaining the convergence, we need the notion of \emph{Hadamard differentiable} functions.

\begin{definition}[Hadamard Derivative]
    A function $\phi:\mathcal{D}\to\mathcal{E}$ is Hadamard differentiable at $\mathbb{T}\in\mathcal{D}$ if there exists a continuous linear map $\phi'_{\mathbb{T}}:\mathcal{D}\to\mathcal{E}$ such that for any $h_t,h\in\mathcal{D}$ satisfying $h_t\to h$ in $\mathcal{D}$ as $t\to 0$, we have
    \begin{align}
        \left\lVert \frac{\phi(\mathbb{T} +th_t) - \phi(\mathbb{T})}{t} - \phi'_{\mathbb{T}}(h) \right\rVert_{\mathcal{E}} \to 0, \quad \text{as }\ \ t\to 0.
    \end{align}
    In this case, we say that $\phi'_{\mathbb{T}}$ is the Hadamard derivative of $\phi$ at $\mathbb{T}$.
\end{definition}
If $\phi$ is Hadamard differentiable then one can obtain the convergence of stochastic processes under $\phi$---this is essentially the statement of the functional delta method.
\begin{lemma}[{\cite[Theorem 20.8]{Vaart1998}}] \label{lemma:functional_delta}
    Let $\mathcal{D}$ and $\mathcal{E}$ be normed linear spaces. Let $\mathbb{X}_n,\mathbb{X}$ and $\mathbb{T}$ be stochastic processes with values in $\mathcal{D}$ such that $a_n(\mathbb{X}_n - \mathbb{T}) \xrightarrow{d} \mathbb{X}$. Let $\phi:\mathcal{D}\to\mathcal{E}$ be Hadamard differentiable at $\mathbb{T}$. Then,
    \begin{align}
        a_n\left(\phi(\mathbb{X}_n)-\phi(\mathbb{T})\right) &\xrightarrow{d} \phi'_{\mathbb{T}}(\mathbb{X}). \label{eq:func_convd} 
        \intertext{If, in addition, $\phi'_{\mathbb{T}}$ is continuous on $\mathcal{D}$. Then,} a_n\left(\phi(\mathbb{X}_n)-\phi(\mathbb{T})\right)  &= \phi'_{\mathbb{T}}\left(a_n(\mathbb{X}_n-\mathbb{T})\right) + o_{\mathbb{P}}(1). \label{eq:func_convP}
    \end{align}
\end{lemma}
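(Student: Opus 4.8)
The plan is to deduce both conclusions~\eqref{eq:func_convd} and~\eqref{eq:func_convP} from the extended continuous mapping theorem, exploiting the fact that Hadamard differentiability is precisely the hypothesis that theorem requires; throughout I treat $\mathbb{T}$ as the fixed centering element, as in every application in this paper. For each $n$ I would introduce the map $g_n:\mathcal{D}\to\mathcal{E}$,
\begin{align}
    g_n(h) \coloneqq a_n\bigl(\phi(\mathbb{T}+a_n^{-1}h)-\phi(\mathbb{T})\bigr),
\end{align}
which is well defined since $\mathcal{D}$ is a linear space, and record the identity $a_n(\phi(\mathbb{X}_n)-\phi(\mathbb{T})) = g_n\bigl(a_n(\mathbb{X}_n-\mathbb{T})\bigr)$. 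The first step is to observe that the defining property of the Hadamard derivative is exactly that $g_n(h_n)\to\phi'_{\mathbb{T}}(h)$ in $\mathcal{E}$ whenever $h_n\to h$ in $\mathcal{D}$: taking $t_n=a_n^{-1}\to 0$, the difference quotient $\bigl(\phi(\mathbb{T}+t_nh_n)-\phi(\mathbb{T})\bigr)/t_n$ is literally $g_n(h_n)$.

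Next I would note that $Z_n\coloneqq a_n(\mathbb{X}_n-\mathbb{T})\xrightarrow{d}\mathbb{X}$ by hypothesis, with $\mathbb{X}$ tight and concentrated on a separable subset of $\mathcal{D}$ (in our setting $\mathbb{X}$ is a continuous transformation of Brownian bridges, so this holds). Then I would apply the extended continuous mapping theorem (see \cite[Theorem~18.11]{Vaart1998}) to the sequence of maps $g_n$ together with the limiting map $\phi'_{\mathbb{T}}$; its convergence-along-sequences hypothesis was verified in the previous step, so it outputs $g_n(Z_n)=a_n(\phi(\mathbb{X}_n)-\phi(\mathbb{T}))\xrightarrow{d}\phi'_{\mathbb{T}}(\mathbb{X})$, which is~\eqref{eq:func_convd}.

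For the asymptotic linearization~\eqref{eq:func_convP} I would rerun this argument with the maps $\tilde{g}_n(h)\coloneqq g_n(h)-\phi'_{\mathbb{T}}(h)$. If $h_n\to h$ in $\mathcal{D}$, then $g_n(h_n)\to\phi'_{\mathbb{T}}(h)$ by Hadamard differentiability while $\phi'_{\mathbb{T}}(h_n)\to\phi'_{\mathbb{T}}(h)$ by continuity of the linear map $\phi'_{\mathbb{T}}$, whence $\tilde{g}_n(h_n)\to 0$; the extended continuous mapping theorem then gives $\tilde{g}_n(Z_n)\xrightarrow{d}0$, and convergence in distribution to the constant $0$ is convergence in probability, so
\begin{align}
    a_n\bigl(\phi(\mathbb{X}_n)-\phi(\mathbb{T})\bigr)-\phi'_{\mathbb{T}}\bigl(a_n(\mathbb{X}_n-\mathbb{T})\bigr)=\tilde{g}_n(Z_n)=o_{\mathbb{P}}(1).
\end{align}

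The step I expect to be the main obstacle is not analytic but measure-theoretic: in the applications $\mathcal{D}$ is a space such as $D[a,b]$ or $\ell^{\infty}(\mathcal{F})$ under the uniform norm, which is non-separable, so the maps $g_n$ and $\tilde{g}_n$ need not be Borel measurable and the symbol $\xrightarrow{d}$ must be read in the Hoffmann--J{\o}rgensen sense, with outer expectations. One must therefore invoke the form of the (extended) continuous mapping theorem valid in that framework, which requires the limit $\mathbb{X}$ to be tight and separable-valued; checking that this holds here --- because $\mathbb{X}$ is built from Brownian bridges composed with continuous maps --- is what makes the otherwise short argument rigorous.
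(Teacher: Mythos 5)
Your proof is correct and is essentially the standard argument from the cited source (van der Vaart, Theorem 20.8) — the paper itself states this lemma as an imported result without reproving it. You correctly reduce both conclusions to the extended continuous mapping theorem by noting that Hadamard differentiability is precisely the sequential-convergence hypothesis that theorem requires (with $t_n = a_n^{-1}$), and you correctly flag the tightness/separability of $\mathbb{X}$ and the Hoffmann--J\o rgensen reading of $\xrightarrow{d}$ as the technical points that make the short argument legitimate in non-separable spaces like $D[a,b]$ or $\ell^\infty(\mathcal{F})$. One cosmetic remark: under the paper's own Definition of the Hadamard derivative, $\phi'_{\mathbb{T}}$ is already required to be a continuous linear map, so the clause ``if, in addition, $\phi'_{\mathbb{T}}$ is continuous'' in the lemma statement is redundant here; your proof uses continuity of $\phi'_{\mathbb{T}}$ only where it is actually needed (in showing $\tilde{g}_n(h_n)\to 0$), which is the right place.
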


\subsection{Functional delta method for bootstrap}\label{sec:funcboot}
In this work, we use the bootstrap procedure to estimate the distribution of stochastic processes of type $a_n\left(\phi(\mathbb{X}_n) - \phi(\mathbb{T})\right)$. Let $\tilde{P}_n$ be the empirical distribution based on the sample $X_1,\ldots,X_n$ and $\mathbb{X}^*_n$ be the bootstrap estimator based on a bootstrap sample $X^*_1,\ldots,X^*_n\sim \tilde{P}_n$. In view of~\eqref{eq:func_convd}, the asymptotic consistency of the bootstrap distribution requires that:
\begin{align}
    a_n\left(\phi(\mathbb{X}_n) - \phi(\mathbb{T})\right) \xrightarrow{d} \phi'_{\mathbb{T}}(\mathbb{X}) \quad \text{and} \quad a_n\left(\phi(\mathbb{X}^*_n) - \phi(\mathbb{X}_n)\right) \xrightarrow{\mathbb{P}\vert \tilde{P}_n} \phi'_{\mathbb{T}}(\mathbb{X}), \label{eq:boot_req}
\end{align}
where $\xrightarrow{\mathbb{P}\vert \tilde{P}_n}$ denotes the convergence in probability, conditionally given $X_1,\ldots,X_n$ in distribution, which can be formally written in terms of the bounded Lipschitz metric:
\begin{equation*}
    \sup_{h\in BL_1(\mathcal{E})}\left\lvert \mathbb{E}_n\left[ h\left(a_n(\phi(\mathbb{X}^*_n) - \phi(\mathbb{X}_n)\right) \right] - \mathbb{E}\left[h\left(\phi'(\mathbb{T})\right)\right] \right\rvert \xrightarrow{\mathbb{P}}  0,
\end{equation*}
where $\mathbb{E}_n$ is the expectation with respect to $\tilde{P}_n$ and $BL_1(\mathcal{E})$ is the space of Lipschitz functions $h:\mathcal{E} \to [-1,1]$ with Lipschitz constants bounded by one.

The first convergence in~\eqref{eq:boot_req} can be obtained via the functional delta method, and the second convergence can be obtained using the following lemma:

\begin{lemma}[{\cite[Theorem 23.9]{Vaart1998}}] \label{lemma:deltaboot}
    Let $\mathcal{D}$ and $\mathcal{E}$ be normed linear spaces. Let $\mathbb{X}_n,\mathbb{X}$ and $\mathbb{T}$ be stochastic processes with values in $\mathcal{D}$ such that $a_n(\mathbb{X}_n - \mathbb{T}) \xrightarrow{d} \mathbb{X}$ and $a_n(\mathbb{X}^*_n - \mathbb{X}_n) \xrightarrow{\mathbb{P}\vert\tilde{P}_n} \mathbb{X}$. Let $\phi:\mathcal{D}\to\mathcal{E}$ be Hadamard differentiable at $\mathbb{T}$. If $\mathbb{X}$ is tight, then  \begin{equation*}
        a_n\left(\phi(\mathbb{X}^*_n) - \phi(\mathbb{X}_n)\right) \xrightarrow{\mathbb{P}\vert \tilde{P}_n} \phi'_{\mathbb{T}}(\mathbb{X}).
    \end{equation*}
\end{lemma}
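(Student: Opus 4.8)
This is the bootstrap functional delta method (\cite[Theorem 23.9]{Vaart1998}), so a complete argument can simply be cited; here is how I would reconstruct it. The plan is to reduce the claim to a purely analytic property of the difference-quotient maps of $\phi$ and then push the two hypotheses through a conditional version of the extended continuous mapping theorem. For each $n$, define $g_n:\mathcal{D}\to\mathcal{E}$ by $g_n(h)=a_n\bigl(\phi(\mathbb{T}+a_n^{-1}h)-\phi(\mathbb{T})\bigr)$, with the convention $g_n(h)=\phi'_{\mathbb{T}}(h)$ wherever the right-hand side is undefined. Hadamard differentiability of $\phi$ at $\mathbb{T}$ says exactly that $g_n(h_n)\to\phi'_{\mathbb{T}}(h)$ whenever $h_n\to h$ in $\mathcal{D}$ (specialize the defining limit to the sequence $t=a_n^{-1}\downarrow 0$): the maps $g_n$ converge to the continuous linear map $\phi'_{\mathbb{T}}$ in the ``continuous along convergent sequences'' sense that the extended continuous mapping theorem requires.

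Next I would rewrite the quantity of interest in terms of $g_n$ and the scaled increments $\mathbb{Z}_n:=a_n(\mathbb{X}_n-\mathbb{T})$ and $\mathbb{W}_n^*:=a_n(\mathbb{X}_n^*-\mathbb{X}_n)$:
\begin{equation*}
a_n\bigl(\phi(\mathbb{X}_n^*)-\phi(\mathbb{X}_n)\bigr)=g_n(\mathbb{Z}_n+\mathbb{W}_n^*)-g_n(\mathbb{Z}_n)=:\tilde g_n(\mathbb{Z}_n,\mathbb{W}_n^*),
\end{equation*}
where $\tilde g_n(h,k):=g_n(h+k)-g_n(h)$. By the previous paragraph together with the linearity of $\phi'_{\mathbb{T}}$, $(h_n,k_n)\to(h,k)$ forces $\tilde g_n(h_n,k_n)\to\phi'_{\mathbb{T}}(h+k)-\phi'_{\mathbb{T}}(h)=\phi'_{\mathbb{T}}(k)$; the limit map $(h,k)\mapsto\phi'_{\mathbb{T}}(k)$ is continuous and, crucially, does not involve its first argument. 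Hence it suffices to prove that $\tilde g_n(\mathbb{Z}_n,\mathbb{W}_n^*)$ converges, conditionally on the sample, in distribution to $\phi'_{\mathbb{T}}(\mathbb{X})$---and the nuisance coordinate $\mathbb{Z}_n$ cannot hurt because it never appears in the limit.

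For the conditional convergence I would argue along subsequences. Given an arbitrary subsequence, extract a further one along which the conditional laws of $\mathbb{W}_n^*$ converge weakly to that of $\mathbb{X}$ for $P\otimes Q$-almost every sample (a diagonal argument over a countable convergence-determining class), and along which tightness of $\mathbb{X}$---hence of $\mathbb{Z}_n=a_n(\mathbb{X}_n-\mathbb{T})$, by the first hypothesis---lets one confine $\mathbb{Z}_n$ to a fixed compact subset of $\mathcal{D}$ outside an event of arbitrarily small probability. On that event the property above makes $\tilde g_n(\mathbb{Z}_n,\cdot)$ behave, uniformly enough over the admissible values of $\mathbb{Z}_n$, like $\phi'_{\mathbb{T}}$; applying the deterministic extended continuous mapping theorem to $\mathbb{W}_n^*\xrightarrow{\mathbb{P}\vert\tilde P_n}\mathbb{X}$ then gives $\tilde g_n(\mathbb{Z}_n,\mathbb{W}_n^*)\xrightarrow{\mathbb{P}\vert\tilde P_n}\phi'_{\mathbb{T}}(\mathbb{X})$. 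Unwinding the bounded-Lipschitz-metric definition of conditional weak convergence and using the subsequence characterization of convergence in (outer) probability converts this into the stated limit.

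The main obstacle is precisely this hybrid conditional structure: given the sample, $\mathbb{Z}_n=a_n(\mathbb{X}_n-\mathbb{T})$ is a deterministic element that is only tight, not convergent, yet it sits inside $g_n$, so one cannot simply apply a conditional continuous mapping theorem to the bootstrap increment $\mathbb{W}_n^*$ in isolation. The combination of (i) tightness of $\mathbb{X}$, used to localize $\mathbb{Z}_n$ on compacta, and (ii) linearity of $\phi'_{\mathbb{T}}$, which purges $\mathbb{Z}_n$ from the limit, is exactly what unblocks the argument---which is why ``$\mathbb{X}$ tight'' and ``$\phi$ Hadamard (rather than merely G\^{a}teaux) differentiable'' are both indispensable hypotheses. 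Everything else is routine bookkeeping with the bounded-Lipschitz metric; in the present paper the lemma is applied with $\mathcal{D}=D[a,b]\times D[a,b]$, $\phi(u,v)=v^{-1}\circ u$, $\mathbb{T}=(F_P,F_Q)$, and $\phi'_{\mathbb{T}}$ the Hadamard derivative from Lemma~\ref{lemma:hadamarddiff}.
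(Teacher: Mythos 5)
The paper does not prove Lemma~\ref{lemma:deltaboot}: it is quoted verbatim as Theorem~23.9 of \cite{Vaart1998} and used as a black box in the proofs of Theorem~\ref{thm:consistency} and Theorem~\ref{thm:confidence_pointwise}. Your reconstruction follows the argument in van der Vaart: define the difference-quotient maps $g_n(h)=a_n(\phi(\mathbb{T}+a_n^{-1}h)-\phi(\mathbb{T}))$, rewrite the bootstrap increment as $\tilde g_n(\mathbb{Z}_n,\mathbb{W}_n^*)=g_n(\mathbb{Z}_n+\mathbb{W}_n^*)-g_n(\mathbb{Z}_n)$, observe that linearity of $\phi'_{\mathbb{T}}$ makes the limit map $(h,k)\mapsto\phi'_{\mathbb{T}}(k)$ free of the nuisance coordinate, and then combine tightness with a conditional extended continuous mapping argument along subsequences. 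Your diagnosis of what each hypothesis buys---tightness of $\mathbb{X}$ to localize $\mathbb{Z}_n$ on compacta, Hadamard (not merely G\^{a}teaux) differentiability so that $g_n$ converges along moving sequences $h_n\to h$---is exactly the reason both conditions appear in the statement. The one step you gloss over is the passage from ``$\tilde g_n(h_n,k_n)\to\phi'_{\mathbb{T}}(k)$ along every convergent sequence'' to ``$\tilde g_n(\mathbb{Z}_n,\cdot)$ can be replaced by $\phi'_{\mathbb{T}}(\cdot)$ on the good event.'' Since $\mathbb{Z}_n$ is (given the data) a fixed element confined to a compact set $K$, but not a convergent sequence, one actually needs the stronger fact that $\tilde g_n\to\phi'_{\mathbb{T}}$ \emph{uniformly} on $K\times L$ for compacts $K,L$; this follows by a standard compactness-and-contradiction argument from the sequential convergence you already established, and once recorded it lets you apply the ordinary (deterministic) continuous mapping theorem to $\mathbb{W}_n^*$ alone. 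With that detail made explicit, the reconstruction is complete and agrees with the cited source.
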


\section{Full Proof of Theorem \ref{thm:consistency}} \label{sec:proofconsistency}

\begin{proof}[Proof of Theorem \ref{thm:consistency}]
    We divide the proof into seven steps.

    \textbf{Step 1. Find the stochastic limits of $\sqrt{n+m}(\hat{F}_P^*(x) - \hat{F}_P(x))$ and $\sqrt{n+m}(\hat{F}_Q^*(x) - \hat{F}_Q(x))$ in the uniform norm.}
    
    We use the result from~\cite[Theorem 3]{Komls1975} which states that, for any iid sequence $U_1,\ldots \sim P_U$ where $P_U$ is the uniform random distribution on $[0,1]$ and $A_n(t)=n^{-1}\sum_{i=1}^n\mone\{U_i \leq t\}$, there is a Brownian bridge $\mathbb{G}_1$ such that
    \begin{equation}\label{eq:uniconv}
        P_U\left(\sup_{0\leq t \leq 1}\left\lvert \sqrt{n}(A_n(t) - t) - \mathbb{G}_1(t)\right\rvert >\delta\right) \xrightarrow{} 0,
    \end{equation}
    as $n \to \infty$, for any $\delta>0$.
    
    Conditional on $X_1,\ldots,X_n$, $\hat{F}_P$ is a non-random continuous function. So we can apply the above result with $U_i=\hat{F}_P(X^*_i)$ and $t=\hat{F}_P(x)$, yielding $A_n(t)= n^{-1}\sum_{i=1}^n\mone\{U_i \leq \hat{F}_P(x)\} = n^{-1}\sum_{i=1}^n\mone\{\hat{F}_P(X^*_i) \leq \hat{F}_P(x)\} = n^{-1}\sum_{i=1}^n\mone\{X^*_i \leq x\}=\hat{F}_P^*(x)$. The fact that~\eqref{eq:uniconv} holds for all sequences $X_1,\ldots,X_n$ allows us to obtain the $P$-almost sure convergence:
    \begin{equation*}
        \hat{P}_n\left(\sup_{x}\left\lvert \sqrt{n}(\hat{F}_P^*(x) - \hat{F}_P(x)) - \mathbb{G}_1(\hat{F}_P(x))\right\rvert >\delta\right) \xrightarrow{P\text{-a.s.}} 0.
    \end{equation*}
    From~\cite[Lemma A.1]{Horvath2008}, we have $\sup_x \lvert \hat{F}_P(x)-F_P(x)\rvert \xrightarrow{P\text{-a.s.}} 0$. By the uniform continuity of the Brownian bridge, we have $\mathbb{P}\big(\sup_{x}\lvert \mathbb{G}_1(\hat{F}_P(x))
        -\mathbb{G}_1(F_P(x))\rvert >\delta\big) \xrightarrow{P\text{-a.s.}} 0$. Consequently, we have
    \begin{equation}
        \hat{P}_n\left(\sup_{x}\left\lvert \sqrt{n}(\hat{F}_P^*(x) - \hat{F}_P(x)) - \mathbb{G}_1(F_P(x))\right\rvert >\delta\right) \xrightarrow{P\text{-a.s.}} 0.
    \end{equation}
    From the condition $n/(n+m)\to \kappa$ as $n,m\to\infty$, we apply Slutsky's lemma and obtain $(\sqrt{1/\kappa}-\sqrt{(n+m)/n})\sup_x \mathbb{G}_1(F_P(x)) \to 0$ in probability independent of $X_1,\ldots,X_n$. Therefore, we have
    \begin{equation}\label{eq:convFstar}
        \hat{P}_n\left(\sup_{x}\left\lvert \sqrt{n+m}(\hat{F}_P^*(x) - \hat{F}_P(x)) - \sqrt{1/\kappa}\mathbb{G}_1(F_P(x))\right\rvert >\delta\right) \xrightarrow{P\text{-a.s.}} 0.
    \end{equation}
    Using a similar argument, we obtain
    \begin{equation*}
        \hat{Q}_m\left(\sup_{x}\left\lvert \sqrt{n+m}(\hat{F}_Q^*(x) - \hat{F}_Q(x)) - \sqrt{1/(1-\kappa)}\mathbb{G}_2(F_Q(x))\right\rvert >\delta\right) \xrightarrow{Q\text{-a.s.}} 0,
    \end{equation*}
    where $\mathbb{G}_2$ is another independent Browninan bridge.
    
    \textbf{Step 2. Convert the convergences in the uniform norm to those in the bounded Lipschitz metric.}
    
    Recall that $BL_1(\ell^{\infty}(\mathcal{F}))$ denotes the space of Lipschitz functions $h:\ell^{\infty}(\mathcal{F}) \to [-1,1]$ with Lipschitz constants bounded by one. 
    For any $h\in BL_1(\ell^{\infty}(\mathcal{F}))$, we have $\lvert h(\sqrt{n+m}(\hat{F}_P^* - \hat{F}_P)) - h(\sqrt{1/\kappa}\mathbb{G}_1\circ F_P) \rvert \leq \sup_{x}\lvert \sqrt{n+m}(\hat{F}_P^*(x) - \hat{F}_P(x)) - \sqrt{1/\kappa} \mathbb{G}_1(F_P(x))\rvert$. Consequently, it  holds that
    \begin{align}
        &\hat{P}_n\left(\sup_{x}\left\lvert \sqrt{n+m}(\hat{F}_P^*(x) - \hat{F}_P(x)) - \sqrt{1/\kappa}\mathbb{G}_1(F_P(x))\right\rvert >\delta\right) \\
        &\quad  \geq \hat{P}_n\left(\lvert h(\sqrt{n+m}(\hat{F}_P^* - \hat{F}_P)) - h(\sqrt{1/\kappa}\mathbb{G}_1\circ F_P)\rvert >\delta\right).
    \end{align}
    Define an event $S_{\delta,n,m} = \{ h(\sqrt{n+m}(\hat{F}_P^* - \hat{F}_P)) - h(\sqrt{1/\kappa}\mathbb{G}_1\circ F_P) > \delta\}$. Denoting by $\mathbb{E}_n$ the expectation with respect to $\hat{P}_n$, Jensen's inequality yields that
    \begin{align}
        &\left\lvert \mathbb{E}_n h(\sqrt{n+m}(\hat{F}_P^* - \hat{F}_P)) - \mathbb{E} h(\sqrt{1/\kappa}\mathbb{G}_1\circ F_P) \right\rvert \\
        &\leq \mathbb{E}_n \mathbb{E} \left\lvert h(\sqrt{n+m}(\hat{F}_P^* - \hat{F}_P)) - h(\sqrt{1/\kappa}\mathbb{G}_1\circ F_P) \right\rvert \\
        &\leq 2\hat{P}_n(S_{\delta,n,m})+\delta \hat{P}_n(S^c_{\delta,n,m}) \\
        &\leq 2\hat{P}_n(S_{\delta,n,m}) + \delta,
    \end{align} 
    where we used the fact that $h$ is bounded by one in the second to last inequality. From~\eqref{eq:convFstar}, we have $\hat{P}_n(S_{\delta,n,m}) \to 0$ for any arbitrary $\delta$; therefore, by taking supremum over all $h\in BL_1(\ell^{\infty}(\mathcal{F}))$, there is a Brownian bridge $\mathbb{G}_1$ such that
    \begin{equation}\label{eq:EhF}
        \sup_{h\in BL_1(\ell^{\infty}(\mathcal{F}))}\left\lvert \mathbb{E}_n h(\sqrt{n+m}(\hat{F}_P^* - \hat{F}_P)) - \mathbb{E} h(\sqrt{1/\kappa}\mathbb{G}_1\circ F_P) \right\rvert \xrightarrow{P\text{-a.s.}} 0.
    \end{equation}
    Similarly, letting $\mathbb{E}_m$ be the expectation with respect to $\hat{Q}_m$, there is a Brownian bridge $\mathbb{G}_2$ such that
    \begin{equation}\label{eq:EhG}
        \sup_{h\in BL_1(\ell^{\infty}(\mathcal{F}))}\left\lvert \mathbb{E}_m h(\sqrt{n+m}(\hat{F}_Q^* - \hat{F}_Q)) - \mathbb{E} h(\sqrt{1/(1-\kappa)}\mathbb{G}_2\circ F_Q) \right\rvert \xrightarrow{Q\text{-a.s.}} 0.
    \end{equation}

    \textbf{Step 3. Prove the convergence of the sequence $\sqrt{n+m}(\hat{F}_P^* - \hat{F}_P, \hat{F}_Q^*-\hat{F}_Q)$ in the bounded Lipschitz metric.}
    
    Now we will show that 
    \begin{equation}\label{eq:bootconv}
        \sup_{h\in BL_1(\ell^{\infty}(\mathcal{F})^2)}\left\lvert \mathbb{E}_{n,m} h\big(\sqrt{n+m}(\hat{F}_P^* - \hat{F}_P, \hat{F}_Q^*-\hat{F}_Q)\big) - \mathbb{E} h\big(\sqrt{1/\kappa}\mathbb{G}_1\circ F_P, \sqrt{1/(1-\kappa)}\mathbb{G}_2\circ F_Q\big) \right\rvert \xrightarrow{P\otimes Q\text{-a.s.}} 0.
    \end{equation} 
    Consider any arbitrary $h\in BL_1(\ell^{\infty}(\mathcal{F})^2)$.
    For any $w\in \ell^{\infty}$, define $h_1^w,h_2^w\in BL_1(\ell^{\infty})$ by $h_1^w(\cdot) = h(\cdot, w)$ and $h_2^w(\cdot) = h(w,\cdot)$. Denoting $E_{n,m}$ the expectation with respect to the empirical distribution $\hat{\Pr}_{n,m}$, we have
    \begin{align}
        &\left\lvert \mathbb{E}_{n,m} h\big(\sqrt{n+m}(\hat{F}_P^* - \hat{F}_P, \hat{F}_Q^*-\hat{F}_Q)\big) - \mathbb{E}h\big(\sqrt{1/\kappa}\mathbb{G}_1\circ F_P, \sqrt{1/(1-\kappa)}\mathbb{G}_2\circ F_Q\big) \right\rvert  \\
        &\leq  \left\lvert \mathbb{E}_{n,m} h\big(\sqrt{n+m}(\hat{F}_P^* - \hat{F}_P, \hat{F}_Q^*-\hat{F}_Q)\big) - \mathbb{E}_{n,m}\mathbb{E}h\big(\sqrt{1/\kappa}\mathbb{G}_1\circ F_P, \sqrt{n+m}(\hat{F}_Q^* - \hat{F}_Q)\big) \right\rvert \\
        & \qquad + \left\lvert \mathbb{E}_{n,m}\mathbb{E} h\big(\sqrt{1/\kappa}\mathbb{G}_1\circ F_P,\sqrt{n+m}(\hat{F}_Q^* - \hat{F}_Q)\big) - \mathbb{E}h\big(\sqrt{1/\kappa}\mathbb{G}_1\circ F_P,\sqrt{1/(1-\kappa)}\mathbb{G}_2\circ F_Q\big) \right\rvert \\
        &\leq  \sup_{w\in \ell^{\infty}(\mathcal{F})}\left\lvert \mathbb{E}_n h_1^w\big(\sqrt{n+m}(\hat{F}_P^* - \hat{F}_P)\big) - \mathbb{E}h_1^w(\sqrt{1/\kappa}\mathbb{G}_1\circ F_P) \right\rvert   \\
        & \qquad +  \sup_{w\in \ell^{\infty}(\mathcal{F})}\left\lvert \mathbb{E}_m h_2^w\big(\sqrt{n+m}(\hat{F}_Q^* - \hat{F}_Q)\big) - \mathbb{E}h_2^w(\sqrt{1/(1-\kappa}\mathbb{G}_2\circ F_Q) \right\rvert   \\
        &\leq  \sup_{h_1\in BL_1(\ell^{\infty}(\mathcal{F}))}\left\lvert \mathbb{E}_n h_1\big(\sqrt{n+m}(\hat{F}_P^* - \hat{F}_P)\big) - \mathbb{E}h_1(\sqrt{1/\kappa}\mathbb{G}_1\circ F_P) \right\rvert  \\
        & \qquad + \sup_{h_2\in BL_1(\ell^{\infty}(\mathcal{F}))}\left\lvert \mathbb{E}_m h_2\big(\sqrt{n+m}(\hat{F}_Q^* - \hat{F}_Q)\big) - \mathbb{E}h_2(\sqrt{1/(1-\kappa)}\mathbb{G}_2\circ F_Q) \right\rvert.
    \end{align}
    Using~\eqref{eq:EhF} and~\eqref{eq:EhG}, the last expression converges $P\otimes Q$-a.s. to zero uniformly over $h\in BL_1(\ell^{\infty}(\mathcal{F})^2)$ as $n,m\to \infty$. Thus taking the supremum over such $h$ yields~\eqref{eq:bootconv} as desired.

    In addition, under the required conditions on $f_P,f_Q$ and $K$, we can use Corollary 2 of~\cite{Gin2007} to obtain $\sqrt{n}(\hat{F}_P - F_P) \xrightarrow{d} \mathbb{G}_1 \circ F_P$ and $\sqrt{m}(\hat{F}_Q - F_Q) \xrightarrow{d} \mathbb{G}_2 \circ F_Q$. Consequently, we obtain
    \begin{equation}\label{eq:smoothconv}
    \sqrt{n+m}(\hat{F}_P - F_P, \hat{F}_Q - F_Q) \xrightarrow{d} (\sqrt{1/\kappa}\mathbb{G}_1\circ F_P,\sqrt{1/(1-\kappa)}\mathbb{G}_2\circ F_Q).
    \end{equation}

    \textbf{Step 4. Apply the delta method for bootstrap.}

    With~\eqref{eq:bootconv} and~\eqref{eq:smoothconv}, we can now apply the delta method for bootstrap (see Appendix~\ref{sec:funcboot} for a brief exposition). Define a functional $\Psi:D[a,b]\times D[a,b] \to D[a,b]$
    \begin{align}
        \Psi(u,v) = (v^{-1}\circ u(\cdot))/s_\kappa(\cdot).
    \end{align}
     We recall from~\eqref{eq:derivative} that the Hadamard derivative of $\Psi$ at $(F_P,F_Q)$ is 
     \begin{align}
         \Psi'(u,v) = (u(\cdot) - v \circ T_0(\cdot))/((f_Q\circ T)(\cdot) s_\kappa (\cdot)).
     \end{align}
    Using Lemma~\ref{lemma:deltaboot} on $\Psi$, the sequence 
    \begin{align}
        \sqrt{n+m}(\Psi(\hat{F}^*_{P,n},\hat{F}^*_{Q,m}) - \Psi(\hat{F}_P,\hat{F}_Q)) = \sqrt{n+m}(\hat{T}_{n,m}^*-\hat{T}_{n,m})(\cdot)/s_\kappa(\cdot)
    \end{align}
    weakly converges  to 
    \begin{align}
        &\Psi'(\sqrt{1/\kappa}\mathbb{G}_1\circ F_P, \sqrt{1/(1-\kappa)}\mathbb{G}_2\circ F_Q)\\
        &=\left(\sqrt{1/\kappa}\mathbb{G}_1\circ F_P - \sqrt{1/(1-\kappa)}\mathbb{G}_2\circ F_Q\circ T_0\right)/((f_Q\circ T_0)s_\kappa )\\
        &=\left(\sqrt{1/\kappa}\mathbb{G}_1\circ F_P - \sqrt{1/(1-\kappa)}\mathbb{G}_2\circ F_P\right)/((f_Q\circ T_0)s_\kappa)\\
        &=\mathsf{Z}_\kappa,
    \end{align}
    under the bounded Lipschitz metric. More precisely, denoting $\hat{\mathsf{Z}}_{n,m}\coloneqq \sqrt{n+m}(\hat{T}_{n,m}^*-\hat{T}_{n,m})/s_\kappa$, we have 
    \begin{equation}\label{eq:Eh}
        \sup_{h\in BL_1(\ell^{\infty}(\mathcal{F}))}\left\lvert \mathbb{E}_{n,m}h(\hat{\mathsf{Z}}_{n,m}) - \mathbb{E}h(\mathsf{Z}_\kappa) \right\rvert \xrightarrow{P\otimes Q}  0,
    \end{equation}

    \textbf{Step 5. Prove that the sequence $\hat{\Pr}_{n,m}\Big(\sup_x\big\lvert\hat{\mathsf{Z}}_{n,m}(x)\big\rvert\leq x_0\Big)$ converges to $\mathbb{P}\Big(\sup_x\lvert \mathsf{Z}_\kappa(x) \rvert \leq x_0\Big)$ in $P\otimes Q$-probability.}
        
    From~\eqref{eq:Eh}, we consider the supremum over the set of functions $h$ that are of the form $h(f) = \tilde{h}(\sup_x\lvert f(x) \rvert)$ for some $\tilde{h}\in BL_1[0,\infty)$, which results in
    \begin{equation*}
        \sup_{\tilde{h}\in BL_1[0,\infty)}\Big\lvert \mathbb{E}_{n,m}\big[ \tilde{h}\big(\sup_x\big\lvert \hat{\mathsf{Z}}_{n,m}(x)\big\rvert\big) \big] - \mathbb{E}\big[\tilde{h}\big(\sup_x\lvert \mathsf{Z}_\kappa(x) \rvert \big)\big] \Big\rvert \xrightarrow{P\otimes Q}  0,
    \end{equation*}
    By scaling, we can extend the supremum to $BL_M[0,\infty)$, the set of bounded Lipschitz functions with the Lipschitz constants bounded by $M>0$. Given any $\epsilon$ and $\delta>0$, there exists $n,m$ large enough so that
    \begin{equation*}
        P\Big(\sup_{\tilde{h}\in BL_M[0,\infty)}\Big\lvert \mathbb{E}_{n,m}\big[ \tilde{h}\big(\sup_x\big\lvert\hat{\mathsf{Z}}_{n,m}(x)\big\rvert\big) \big] - \mathbb{E}\big[\tilde{h}\big(\sup_x\lvert \mathsf{Z}_\kappa(x) \rvert \big)\big] \Big\rvert > \delta/2 \Big) \leq  \epsilon.
    \end{equation*}
    Fix $x_0>0$. For a sufficiently large $M$, we can find two functions $\tilde{h}_L,\tilde{h}_U \in BL_M[0,\infty)$ such that $\tilde{h}_L(x) \leq \mone\{ 0 \leq x \leq x_0 \} \leq \tilde{h}_U(x)$ and $\mathbb{E}\big[\tilde{h}_U\big(\sup_x\lvert \mathsf{Z}_\kappa(x) \rvert \big) - \tilde{h}_L\big(\sup_x\lvert \mathsf{Z}_\kappa(x) \rvert \big)\big] < \delta/2$. It follows that, with a probability greater than $1-\epsilon$,
    \begin{align}
        &\hat{\Pr}_{n,m}\Big(\sup_x\big\lvert\hat{\mathsf{Z}}_{n,m}(x)\big\rvert\leq x_0\Big) - \mathbb{P}\Big(\sup_x\lvert \mathsf{Z}_\kappa(x) \rvert \leq x_0\Big) \\
        &\leq \mathbb{E}_{n,m}\big[ \tilde{h}_U\big(\sup_x\big\lvert\hat{\mathsf{Z}}_{n,m}(x)\big\rvert\big) \big] - \mathbb{E}\big[\tilde{h}_L\big(\sup_x\lvert \mathsf{Z}_\kappa(x) \rvert \big)\big] \\
        &\leq \mathbb{E}\big[\tilde{h}_U\big(\sup_x\lvert \mathsf{Z}_\kappa(x) \rvert \big)\big] - \mathbb{E}\big[\tilde{h}_L\big(\sup_x\lvert \mathsf{Z}_\kappa(x) \rvert \big)\big] + \delta/2 \\
        &\leq \delta/2 + \delta/2 = \delta.
    \end{align}
    Similarly, we have
    \begin{equation*}
        \hat{\Pr}_{n,m}\Big(\sup_x\big\lvert\hat{\mathsf{Z}}_{n,m}(x)\big\rvert\leq x_0\big) - \mathbb{P}\big(\sup_x\lvert \mathsf{Z}_\kappa(x) \rvert \leq x_0\Big) \geq   -\delta.
    \end{equation*}
    Therefore, we have the convergence in $P\otimes Q$-probability for each $x_0 >0$:
    \begin{equation*}
        \hat{\Pr}_{n,m}\Big(\sup_x\big\lvert\hat{\mathsf{Z}}_{n,m}(x)\big\rvert\leq x_0\Big) \xrightarrow{P\otimes Q} \mathbb{P}\Big(\sup_x\lvert \mathsf{Z}_\kappa(x) \rvert \leq x_0\Big).
    \end{equation*}
    
    \textbf{Step 6. Use Lemma~\ref{lemma:sestimate} to approximate $s_\kappa$ by $\hat{s}_{n,m}$ and finish the proof of~\eqref{eq:empconv}.} 
    
    Let $\hat{\mathsf{Z}}^*_{n,m}\coloneqq \sqrt{n+m}(\hat{T}_{n,m}^*-\hat{T}_{n,m})/\hat{s}_{n,m}$. So we have to show that
    \begin{equation}\label{eq:Zconv}
        \hat{\Pr}_{n,m}\Big(\sup_x\big\lvert\hat{\mathsf{Z}}^*_{n,m}(x)\big\rvert\leq x_0\Big) \xrightarrow{P\otimes Q} \mathbb{P}\Big(\sup_x\lvert \mathsf{Z}_\kappa(x) \rvert \leq x_0\Big).
    \end{equation}
    First, observe that
    \begin{align}
        \hat{\Pr}_{n,m}\Big(\sup_x\big\lvert\hat{\mathsf{Z}}^*_{n,m}(x)\big\rvert\leq x_0\Big) &= \hat{\Pr}_{n,m}\left(\sup_x\big\lvert\hat{\mathsf{Z}}_{n,m}(x)\big\rvert \cdot \left\lvert \frac{s_\kappa(x)}{\hat{s}_{n,m}(x)} \right\rvert \leq x_0\right).
    \end{align}
    This is where we use Lemma~\ref{lemma:sestimate}. For arbitrarily small $\epsilon,\delta,\delta'>0$, there are sufficiently large $n,m$ such that, with $P\otimes Q$-probability greater than $1-\epsilon$, the following inequalities hold simulteneously: first,
    \begin{align}
        \hat{\Pr}_{n,m}\Big(\sup_x\big\lvert\hat{\mathsf{Z}}_{n,m}(x)\big\rvert  \leq x_0(1-\delta)\Big) &\leq\hat{\Pr}_{n,m}\Big(\sup_x\big\lvert\hat{\mathsf{Z}}^*_{n,m}(x)\big\rvert \leq x_0\Big) \\
        &\leq \hat{\Pr}_{n,m}\Big(\sup_x \big\lvert\hat{\mathsf{Z}}_{n,m}(x)\big\rvert \leq x_0(1+\delta)\Big), 
    \end{align}
    which is a result of~\eqref{eq:sconv}.
    Second, we have 
    \begin{align}
        \left\lvert \hat{\Pr}_{n,m}\Big(\sup_x\big\lvert\hat{\mathsf{Z}}_{n,m}(x)\big\rvert  \leq x_0(1-\delta')\Big) - \mathbb{P}\Big(\sup_x\lvert \mathsf{Z}_\kappa(x) \rvert \leq x_0(1-\delta')\Big) \right\rvert &< \delta/2,
        \intertext{and lastly,}
        \left\lvert \hat{\Pr}_{n,m}\Big(\sup_x\big\lvert\hat{\mathsf{Z}}_{n,m}(x)\big\rvert  \leq x_0(1+\delta')\Big) - \mathbb{P}\Big(\sup_x\lvert \mathsf{Z}_\kappa(x) \rvert \leq x_0(1+\delta')\Big) \right\rvert &< \delta/2.
    \end{align}
    By the continuity of the distribution function of $\sup_x\lvert \mathsf{Z}_\kappa(x) \rvert$ (Lemma~\ref{lemma:cont}), we can choose $\delta'$ sufficiently small that $\left\lvert \mathbb{P}\Big(\sup_x\lvert \mathsf{Z}_\kappa(x) \rvert \leq x_0(1-\delta')\Big) - \mathbb{P}\Big(\sup_x\lvert \mathsf{Z}_\kappa(x) \rvert \leq x_0(1-\delta')\Big) \right\rvert < \delta/2$. Combining the above inequalities yields the following bound with $P\otimes Q$-probability greater than $1-\epsilon$, 
    \[\left\lvert \hat{\Pr}_{n,m}\Big(\sup_x\big\lvert\hat{\mathsf{Z}}^*_{n,m}(x)\big\rvert \leq x_0\Big) - \mathbb{P}\Big(\sup_x\lvert \mathsf{Z}_\kappa(x) \rvert \leq x_0\Big)  \right\rvert < \delta, \] 
    which implies~\eqref{eq:Zconv}.

    \textbf{Step 7. Prove~\eqref{eq:estconv} using the functional delta method.}
    
    Starting with~\eqref{eq:smoothconv}, it follows from the functional delta method (Lemma~\ref{lemma:functional_delta}) that
        \begin{align}
        \sqrt{n+m}\left(\frac{\hat{T}_{n,m} - T_0}{s_\kappa}\right) &= \sqrt{n+m}(\psi(\hat{F}_P,\hat{F}_Q)-\psi(F_P,F_Q)) \\  &\xrightarrow{d} \psi'_{F,G}\left(\sqrt{1/\kappa}\mathbb{G}_1\circ F_P,\sqrt{1/(1-\kappa)}\mathbb{G}_2\circ F_Q\right) \\
        &= \mathsf{Z}_\kappa.
    \end{align}
    From~\eqref{eq:sconv}, we have $s_\kappa/\hat{s}_{n,m} \xrightarrow{P\otimes Q} 1$ under the uniform norm, so it follows from Slutsky's theorem that
    \[ \sqrt{n+m}\left(\frac{\hat{T}_{n,m} - T_0}{\hat{s}_{n,m}}\right) \xrightarrow{d} \mathsf{Z}_\kappa.\]
    The continuous mapping theorem yields
        \begin{equation*}
        \sqrt{n+m}\sup_x \left\lvert\frac{\hat{T}_{n,m}(x) - T_0(x)}{\hat{s}_{n,m}}\right\rvert \xrightarrow{d} \sup_x \lvert \mathsf{Z}_\kappa(x) \rvert,
    \end{equation*}
    as desired.
\end{proof}

\section{Additional Results and Proofs} \label{sec:additionalproofs}

\begin{proof}[Proof of Proposition \ref{eq:smoothbahadur}]
    The proof is analogous to that of Proposition~\ref{prop:error}. Assumption~\ref{assumption:FGK} and~\ref{assumption:gK} guarantee that $F_Q$ satisfies the conditions in Proposition~\ref{prop:error}. In addition, under Assumption~\ref{assumption:FGK}, we have the following weak convergences under the uniform norm which is due to~\cite[Corollary 2 ]{Gin2007}: 
    \[\sqrt{n}(\hat{F}_P - F_P) \xrightarrow{d} \mathbb{G}_1 \circ F_P \quad \text{and} \quad  \sqrt{m}(\hat{F}_Q - F_Q) \xrightarrow{d} \mathbb{G}_2 \circ F_Q,\] 
    for some Brownian bridges $\mathbb{G}_1$ and $\mathbb{G}_2$. This allows us to apply the functional delta method. The rest of the proof follows as in the proof of Proposition~\ref{prop:error}.
\end{proof}

\begin{proof}[Proof of Lemma \ref{lemma:sestimate}]
     Denote 
     \begin{align}
         \hat{A}_{n,m} = \sqrt{(n+m)(n^{-1}+m^{-1})\hat{F}_P(1-\hat{F}_P)}
     \end{align}
     and
     \begin{align}
         A_\kappa = \sqrt{(\kappa^{-1}+ (1-\kappa)^{-1})F_P(1-F_P)}
     \end{align}
     for notational convenience. Since $f_Q$ is uniformly bounded above and $A_\kappa$ is uniformly bounded away from zero on $[a,b]$, the function $s$ is uniformly bounded away from zero on $[a,b]$, so it suffices to show that $\sup_{x\in [a,b]} \lvert \hat{s}_{n,m}(x) - s_\kappa(x)\rvert \xrightarrow{P\otimes Q} 0$.  Now we split the difference into two terms:
     \begin{equation}\label{eq:ssplit}
         \hat{s}_{n,m}(x) - s_\kappa(x) = \left[\frac{\hat{A}_{n,m}(x)}{\hat{f}_Q(\hat{T}_{n,m}(x))} -\frac{\hat{A}_{n,m}(x)}{f_Q(T_0(x))}  \right] + \left[ \frac{\hat{A}_{n,m}(x) - A_\kappa(x)}{f_Q(T_0(x))}  \right].
     \end{equation}
     For the first term in~\eqref{eq:ssplit}, we write \[\frac{\hat{A}_{n,m}(x)}{\hat{f}_Q(\hat{T}_{n,m}(x))} -\frac{\hat{A}_{n,m}(x)}{f_Q(T_0(x))} = \frac{\hat{A}_{n,m}(x)\left[f_Q(T_0(x)) - \hat{f}_Q(\hat{T}_{n,m}(x))\right]}{f_Q(T_0(x))\hat{f}_Q(\hat{T}_{n,m}(x))}.\] 
     The conditions on $K$, $r_n$ and the uniform continuity of $f_Q$ on $[a,b]$ imply $\sup_{x\in\R} \lvert \hat{f}_Q (x) - f_Q(x) \rvert  \xrightarrow{Q\text{-a.s.}} 0$ (see \cite{Bertrand1978} and~\cite[p. 72]{Silverman2018}). Consequently, $\hat{f}_Q\circ\hat{T}_{n,m}$ is uniformly bounded away from zero $P\otimes Q$-a.s. Combining this with a simple observation that $\hat{A}_{n,m}$ is uniformly bounded above, it suffices to show that
     \begin{equation}\label{eq:grecip}
         \sup_{x\in [a,b]} \left \lvert \hat{f}_Q(\hat{T}_{n,m}(x)) -f_Q(T_0(x)) \right \rvert \xrightarrow{P\otimes Q\text{-a.s.}} 0.
     \end{equation}
         We split $\hat{f}_Q\circ \hat{T}_{n,m} - f_Q\circ T_0$ as follows:
    \begin{equation}\label{eq:gTdecom}
        \hat{f}_Q\circ\hat{T}_{n,m} -   f_Q\circ T_0= \{ \hat{f}_Q\circ\hat{T}_{n,m} - f_Q\circ\hat{T}_{n,m} \} + \{ f_Q\circ\hat{T}_{n,m} - f_Q\circ T_0 \} 
    \end{equation}
    For the second term on the right, we use the Bahadur representation (Proposition~\ref{prop:error}) on $[a,b]$, which allows us to obtain the convergence in probability $\hat{T}_{n,m} \xrightarrow{P\otimes Q} T_0$ of functions in $C[a,b]$ under the uniform norm. Since the mapping $T_0 \mapsto g\circ T_0$ is continuous in $C[a,b]$, it follows from the continuous mapping theorem that $f_Q\circ \hat{T}_{n,m} \xrightarrow{P\otimes Q} f_Q\circ T_0$. In other words,
    \begin{equation*}
        \sup_{x\in [a,b]} \lvert f_Q(\hat{T}_{n,m}(x)) - f_Q(T_0(x)) \rvert \xrightarrow{P\otimes Q} 0.
    \end{equation*}
    For the first term , we use $\sup_{x\in\R} \lvert \hat{f}_Q (x) - f_Q(x) \rvert  \xrightarrow{Q\text{-a.s.}} 0$ which implies the convergence $\sup_{x\in[a,b]} \lvert \hat{f}_Q (\hat{T}_{n,m}(x)) - f_Q(\hat{T}_{n,m}(x)) \rvert  \xrightarrow{P\otimes Q\text{-a.s.}} 0$.
    
     Now we consider the second term in~\eqref{eq:ssplit}. From~\cite[Lemma A.1]{Horvath2008}, we have $\sup_x \lvert \hat{F}_P(x)-F_P(x)\rvert \xrightarrow{P\text{-a.s.}} 0$. From this, we split the numerator as follows:
    \begin{alignat*}{2}
        \hat{A}_{n,m} -  A_\kappa   &= \frac1{\hat{A}_{n,m}+A_\kappa}\Big[ (&&\kappa^{-1} + (1-\kappa)^{-1})(\hat{F}_P - F_P) \\
        &  &&+ (\kappa^{-1} + (1-\kappa)^{-1})(F_P - \hat{F}_P)(F_P + \hat{F}_P) \\
        & &&+\left\{(n+m)(n^{-1}+m^{-1}) - (\kappa^{-1} + (1-\kappa)^{-1}) \right\}\hat{F}_P(1-\hat{F}_P) \Big].
    \end{alignat*}
    On the closed interval $[a,b]$, the sequence $\hat{A}_{n,m}+A_\kappa$ is uniformly bounded away from zero, and $F_P + \hat{F}_P$ and $\hat{F}_P(1-\hat{F}_P)$ are uniformly bounded above. Combining these with the fact that $f_Q\circ T$ is also uniformly bounded away from zero on $[a,b]$, we obtain
    \begin{equation}\label{eq:numer}
        \sup_{x\in [a,b]} \left\lvert \frac{\hat{A}_{n,m}(x) - A_\kappa(x)}{f_Q(T_0(x))} \right\rvert \xrightarrow{P\text{-a.s.}} 0.
    \end{equation}
\end{proof}

We also need a result on the continuity of the distribution function of the suprema of Gaussian processes.
\begin{lemma}\label{lemma:cont}
    Let $\mathsf{Z} = \{\mathsf{Z}(t) : t\in I\}$ be a tight Gaussian process with $\mathbb{E}\mathsf{Z}(t) = 0$ and $\mathbb{E}\mathsf{Z}^2(t) = 1$ for all $t\in I$. Then the distribution function $x_0 \mapsto \mathbb{P}\left( \sup_t \lvert \mathsf{Z}(t) \rvert \leq x_0 \right)$ is continuous.
\end{lemma}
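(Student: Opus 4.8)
The plan is to reduce continuity of the distribution function $x_0 \mapsto \mathbb{P}\big(M \le x_0\big)$, with $M \coloneqq \sup_t \lvert \mathsf{Z}(t) \rvert$, to the statement that the law of $M$ is atomless: a distribution function of a real random variable is continuous precisely when that variable has no point masses, so it suffices to show $\mathbb{P}(M = x_0) = 0$ for every $x_0 \in \R$.

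Two of the three ranges of $x_0$ are immediate. For $x_0 < 0$ we have $\mathbb{P}(M \le x_0) = 0$. For $x_0 = 0$, fix any $t_0 \in I$; since $M \ge \lvert \mathsf{Z}(t_0) \rvert$ and $\mathsf{Z}(t_0) \sim N(0,1)$, we get $M > 0$ almost surely, hence $\mathbb{P}(M = 0) = 0$. The remaining work is to rule out atoms in $(0,\infty)$.

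For that I would exploit the Gaussian structure rather than the supremum directly. By tightness, $\mathsf{Z}$ can be realized as a Borel random element of a separable closed subspace $B$ of $\ell^\infty(I)$ with the supremum norm $\lVert \cdot \rVert_\infty$, and the induced law $\mu$ on $B$ is a centered Radon Gaussian measure with $M = \lVert \mathsf{Z} \rVert_\infty$; moreover the norm is not $\mu$-a.e.\ zero, again because $M \ge \lvert \mathsf{Z}(t_0) \rvert > 0$ a.s. I would then invoke the classical theorem on the distribution of norms (more generally, measurable seminorms) of a centered Gaussian random vector in a separable Banach space --- due to Tsirelson and to Ibragimov, Sudakov and Tsirelson, and exposited in Lifshits' monograph on Gaussian random functions --- which asserts that under these hypotheses the law of $\lVert \mathsf{Z} \rVert_\infty$ is absolutely continuous on $(0,\infty)$. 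In particular it has no atom there. Concretely, this gives $\mathbb{P}(M = x_0) = 0$ for $x_0 > 0$; combined with the two easy cases, $M$ is atomless on all of $\R$, so its distribution function is continuous. (When $\mathsf{Z}$ has continuous sample paths on a compact index set, as for $\mathsf{Z}_\kappa$ on $[a,b]$, one may equivalently cite the corresponding statement for suprema of continuous Gaussian processes with $\sup_t \operatorname{Var}\mathsf{Z}(t) > 0$.)

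The only genuinely substantial input is the cited Gaussian anti-concentration theorem; everything else is bookkeeping. The main point to get right is therefore the reduction itself --- checking that tightness delivers the separable-Banach-space realization under which that theorem applies, and that $M$ is exactly the associated norm. A more elementary alternative --- writing $M$ as a supremum over a countable separating subset and passing to the limit in the distribution functions of the finite maxima --- does not go through directly, since those finite-dimensional densities need not be uniformly bounded, so I would not pursue it and would instead rely on the structural Gaussian result.
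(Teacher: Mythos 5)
Your proof is correct, but it follows a genuinely different route from the paper. The paper's proof is essentially a one-liner: it invokes the Chernozhukov--Chetverikov--Kato anti-concentration inequality for suprema of Gaussian processes, which under the unit-variance hypothesis gives the uniform bound
\[
\sup_{x_0\in\R}\,\mathbb{P}\Bigl(\Bigl\lvert \sup_{t\in I}\lvert\mathsf{Z}(t)\rvert - x_0\Bigr\rvert \le \epsilon\Bigr)\ \le\ 4\epsilon\Bigl(1+\mathbb{E}\Bigl[\sup_{t\in I}\lvert\mathsf{Z}(t)\rvert\Bigr]\Bigr),
\]
and since tightness makes $\mathbb{E}[\sup_t\lvert\mathsf{Z}(t)\rvert]$ finite (Borell--TIS), this is a uniform Lipschitz estimate on the distribution function of $\sup_t\lvert\mathsf{Z}(t)\rvert$, so continuity is immediate. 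You instead reduce to atomlessness of the law of $M=\sup_t\lvert\mathsf{Z}(t)\rvert$ and invoke the Tsirelson / Ibragimov--Sudakov--Tsirelson theorem on the distribution of the norm of a centered Gaussian element of a separable Banach space. That route is sound, with one small point worth making explicit: the general form of that theorem permits a single atom, located at the infimum of the essential range of the seminorm. You need to know this infimum is $0$, which follows because the support of a centered Radon Gaussian measure is a closed linear subspace and hence contains $0$; the same support fact also underwrites the separable-Banach realization from tightness. Once that is noted, your separate argument that $\mathbb{P}(M=0)=0$ via $\mathsf{Z}(t_0)\sim N(0,1)$ closes the gap. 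The trade-off between the two approaches: yours does not use the unit-variance hypothesis beyond nondegeneracy at a single point and proves the stronger conclusion of absolute continuity, at the price of importing a heavier structural Gaussian theorem; the paper's CCK route is shorter, exploits unit variance crucially, and gives a quantitative modulus of continuity, which is what one typically wants in bootstrap arguments of this kind.
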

\begin{proof}[Proof of Lemma \ref{lemma:cont}]
    We use an anti-concentration inequality for the suprema of Gaussian processes~\cite[Lemma A.1]{Chernozhukov2014}:
    \begin{equation*}
        \sup_{x_0\in\R} \mathbb{P}\left(\left\lvert \sup_{t\in I} \lvert \mathsf{Z}(t) \rvert -x_0 \right\rvert \leq \epsilon \right) \leq 4\epsilon\left( 1 + \mathbb{E}\left[\sup_{t\in I} \lvert \mathsf{Z}(t) \rvert\right]\right).
    \end{equation*}
    which implies the continuity of the distribution function at any $x_0\in\R$.
\end{proof}

\begin{proof}[Proof of Corollary \ref{cor:validity_band}]
    For notational convenience, denote $\lVert \mathsf{Z}_\kappa \rVert_{[a,b]} = \sup_{x\in [a,b]}\lvert \mathsf{Z}_\kappa(x)\rvert$. Recall from~\eqref{eq:empconv} that the sequence $\hat{\Pr}_{n,m}\Big(\sqrt{n + m}\sup_x\big\lvert\hat{T}_{n,m}^*(x)-\hat{T}_{n,m}(x)\big\rvert/\hat{s}_{n,m}(x)\leq x_0\Big)$ converges in $P\otimes Q$-probability to $\mathbb{P}\left(\lVert \mathsf{Z}_\kappa \rVert_{[a,b]} \leq x_0\right)$. By passing along any subsequence and a further subsequence, we can assume that the convergence is $P\otimes Q$-almost surely. 
    
    The almost-sure convergence of the distribution functions implies the almost-sure convergence of the corresponding quantile functions~\cite[Lemma 21.2]{Vaart1998}. Thus, the sequence of quantile functions $\hat{q}_{n,m}$ of $\sqrt{n + m}\sup_x\big\lvert\hat{T}_{n,m}^*(x)-\hat{T}_{n,m}(x)\big\rvert/\hat{s}_{n,m}(x)$ converges $P\otimes Q$-a.s. to the quantile function $q_{\mathsf{Z}}$ of $\lVert \mathsf{Z}_\kappa \rVert_{[a,b]}$ at the continuity points of $q_{\mathsf{Z}}$. Let $1-\alpha$ be one of those continuity points. Then, $\hat{q}_{n,m}(1-\alpha)$ converges $P\otimes Q$-a.s. to $q_{\mathsf{Z}}(1-\alpha)$, and it follows from Slutsky's theorem that $\sqrt{n + m}\sup_x\big\lvert\hat{T}_{n,m}(x)-T_{n,m}(x)\big\rvert/\hat{s}_{n,m}(x) - \hat{q}_{n,m}(1-\alpha)$ converges in distribution to $\lVert \mathsf{Z}_\kappa \rVert_{[a,b]} - q_{\mathsf{Z}}(1-\alpha)$. Therefore,
    \begin{equation}\label{eq:asympconv}
    \begin{split}
     &\mathbb{P}\left(T_0(x) \in \left[\hat{T}_{n,m}(x) \pm \hat{s}_{n,m}(x)\hat{q}_{n,m}(1-\alpha)/\sqrt{n+m}\right], \forall x\in [a,b]\right) \\
    &= \underbrace{\mathbb{P}\left(\sqrt{n + m}\sup_x\frac{\big\lvert\hat{T}_{n,m}(x)-T_{n,m}(x)\big\rvert}{\hat{s}_{n,m}(x)} \leq \hat{q}_{n,m}(1-\alpha)\right)}_{\hat{\mathsf{R}}_{n,m}(1-\alpha)} 
    \\
    &\to \mathbb{P}\left(\lVert \mathsf{Z}_\kappa \rVert_{[a,b]} \leq q_{\mathsf{Z}}(1-\alpha)\right)  = 1-\alpha,
    \end{split}
    \end{equation}
    where the last equality holds because of the continuity of the distribution function of $\lVert \mathsf{Z}_\kappa \rVert_{[a,b]}$ (Lemma~\ref{lemma:cont}). With~\eqref{eq:asympconv}, we will show that the convergence~\eqref{eq:asympconv} holds for all $\alpha \in (0,1)$. As $q_{\mathsf{Z}}(1-\alpha)$ is a monotone function of $\alpha$, it has only countably many discontinuities. Consequently, given any $\alpha \in (0,1)$ and any $\epsilon>0$, there exist continuity points $\alpha_L,\alpha_U \in (0,1)$ such that $\alpha_L < \alpha < \alpha_U$ and $\max\{\alpha_U-\alpha, \alpha-\alpha_L\} < \epsilon/2$. Since the convergence~\eqref{eq:asympconv} holds for $\alpha_L$ and $\alpha_U$, we can find sufficiently large $n$ and $m$ such that $\left\lvert\hat{\mathsf{R}}_{n,m}(1-\alpha_L) - (1-\alpha_L)\right\rvert < \epsilon/2$ and $\left\lvert\hat{\mathsf{R}}_{n,m}(1-\alpha_U) - (1-\alpha_U)\right\rvert < \epsilon/2$. So we must have
    \begin{equation*}
        \hat{\mathsf{R}}_{n,m}(1-\alpha) \leq \hat{\mathsf{R}}_{n,m}(1-\alpha_L) \leq 1-\alpha_L +\epsilon/2 \leq 1-\alpha + \epsilon,
    \end{equation*}
    and similarly,
    \begin{equation*}
        \hat{\mathsf{R}}_{n,m}(1-\alpha) \geq \hat{\mathsf{R}}_{n,m}(1-\alpha_U) \geq 1-\alpha_U -\epsilon/2 \geq 1-\alpha - \epsilon,
    \end{equation*}
    which shows that $\hat{\mathsf{R}}_{n,m}(1-\alpha) \to 1-\alpha$ for all $\alpha\in (0,1)$ as desired.
\end{proof}

\paragraph{Acknowledgements}

We would like to show our gratitude to the associate editor and reviewers for their fruitful comments and suggestions.

\bibliographystyle{alpha}
\bibliography{ConfBand_1D_OT}

\end{document}